\definecolor{rowgray}{gray}{0.85} 
\definecolor{colgray}{gray}{0.85}
\newcolumntype{G}{>{\columncolor{colgray}}c} 
\DeclareMathOperator*\SSS{\mathcal{S}}
\newcommand{\cms}[1]{{\color{purple} \small \sf $\ll$Minshuo: {#1}$\gg$}}
\newcommand{\be}{\begin{equation}}
\newcommand{\ee}{\end{equation}}
\theoremstyle{plain}
\newtheorem{thm}{Theorem}[section]
\newtheorem{lem}[thm]{Lemma}
\newtheorem{prop}[thm]{Proposition}
\newtheorem{cor}{Corollary}
\theoremstyle{definition}
\newtheorem{assumption}{Assumption}
\theoremstyle{remark}
\newtheorem{rem}{Remark}
\title{Parameter-Efficient Subspace Optimization for LLM Fine-Tuning}
\author{
  Yuchen Lou\thanks{Department of Industrial Engineering and Management Sciences, Northwestern University. 
  Emails: \texttt{yuchenlou2026@u.northwestern.edu}, 
  \texttt{zeqiye2029@u.northwestern.edu}, 
  \texttt{minshuo.chen@northwestern.edu}} \quad
  Zeqi Ye\footnotemark[1] \quad
  Minshuo Chen\footnotemark[1]
}
\date{\today}
\begin{document}

\maketitle

\begin{abstract}
    This paper develops a new perspective on parameter-efficient fine-tuning (PEFT) for LLMs, inspired by classical subspace minimization.
    We introduce a unifying framework, {\bf P}arameter-{\bf E}fficient {\bf S}ubspace {\bf O}ptimization ({\bf PESO}), which recovers existing methods such as LoRA and connects them to the principled algorithmic and theoretical foundations of subspace optimization.
    This connection highlights a natural ``exploration--exploitation'' view of subspace methods, guiding the design of new algorithms that achieve strong convergence performance while still preserving memory efficiency. 
    We instantiate the framework into a practical algorithm, {\bf PESO-LoRA}, based on a LoRA-type parameterization.
    Importantly, we provide convergence guarantees stated in the \emph{full-parameter space} for the induced update, addressing a key limitation of LoRA-style analyses that only track low-dimensional factors.
    Empirically, {PESO-LoRA} improves over strong PEFT baselines on standard fine-tuning benchmarks.
\end{abstract}

\section{Introduction}
Training deep neural networks is the cornerstone of modern AI, powering the success of large-scale foundation models such as Large Language Models (LLMs) \citep{brown2020language}.  
At the core, it reduces to solving a high-dimensional optimization problem over weight matrices:
\begin{equation}\label{opt_original}
\textstyle \Delta W^*:=\arg\min_{\Delta W} \,\, \ell\big(W_0 + \Delta W\big),
\end{equation}
where $\ell(\cdot)$ is the loss function, $W_0$ is the initialization, and $\Delta W$ the increment.  
In practice,~\eqref{opt_original} is typically solved by first-order methods such as Adam \citep{kingma2014adam} and AdamW \citep{loshchilov2017decoupled}, which are the workhorses of large-scale training.
However, these methods require storing additional optimizer states (e.g., momentum and velocity), and for LLMs this overhead places enormous pressure on memory resources, making parameter-efficient strategies appealing.



In the realm of fine-tuning, we often have limited labeled data for a downstream task but still wish to adapt the pretrained weights effectively and efficiently.
Therefore, updating the entire parameter set is memory-intensive.
This motivates the study of Parameter-Efficient Fine-Tuning (PEFT) methods \citep{han2024parameter,houlsby2019parameter,hu2022lora}, where optimization is restricted to a smaller set of parameters initialized from pretrained weights.
In other words, $W_0$ denotes weights obtained from a large-scale pretraining phase, and $\Delta W$ is not updated freely but instead follows an efficient parameterization that constrains the search space.


\begin{figure*}[t]
    \centering
    \begin{minipage}[b]{0.58\linewidth}
        \centering
        \begin{subfigure}{\linewidth}
            \centering
            \includegraphics[width=\linewidth]{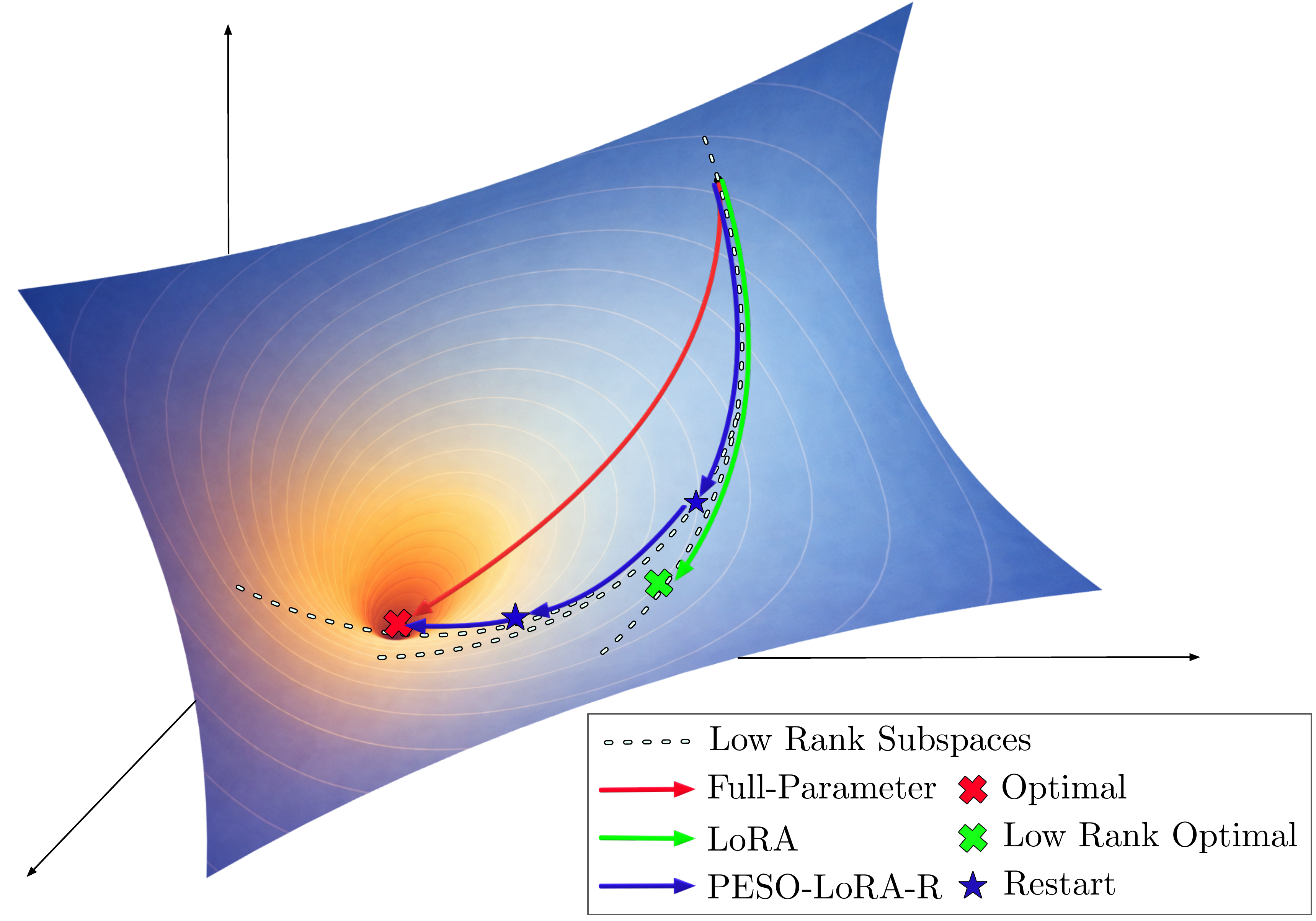}
            \label{fig:trajectory}
        \end{subfigure}
    \end{minipage}
    \hfill
    \begin{minipage}[b]{0.41\linewidth}
        \centering
        \begin{subfigure}{\linewidth}
            \centering
            \includegraphics[width=0.9\linewidth]{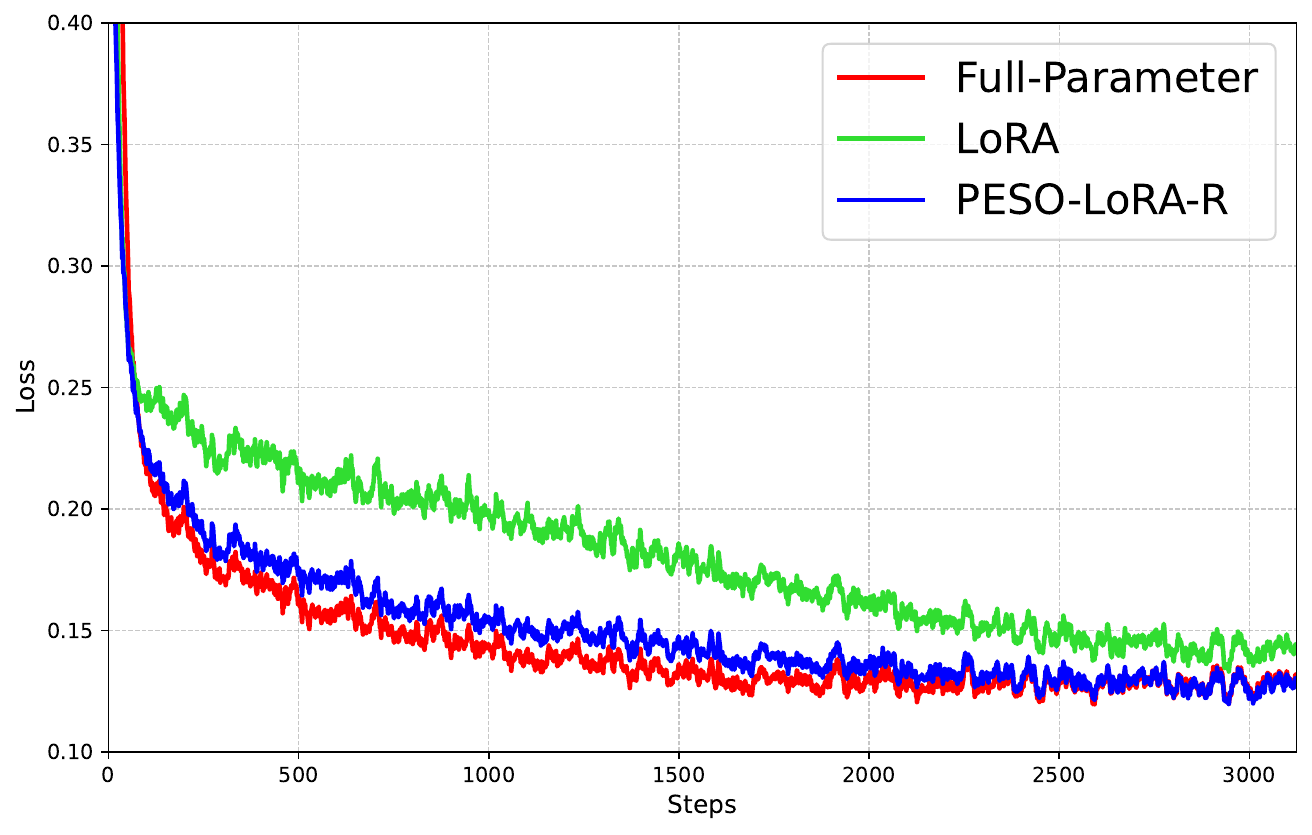}
            \label{fig:real_data}
        \end{subfigure}

        \begin{subfigure}{\linewidth}
            \centering
            \includegraphics[width=0.9\linewidth]{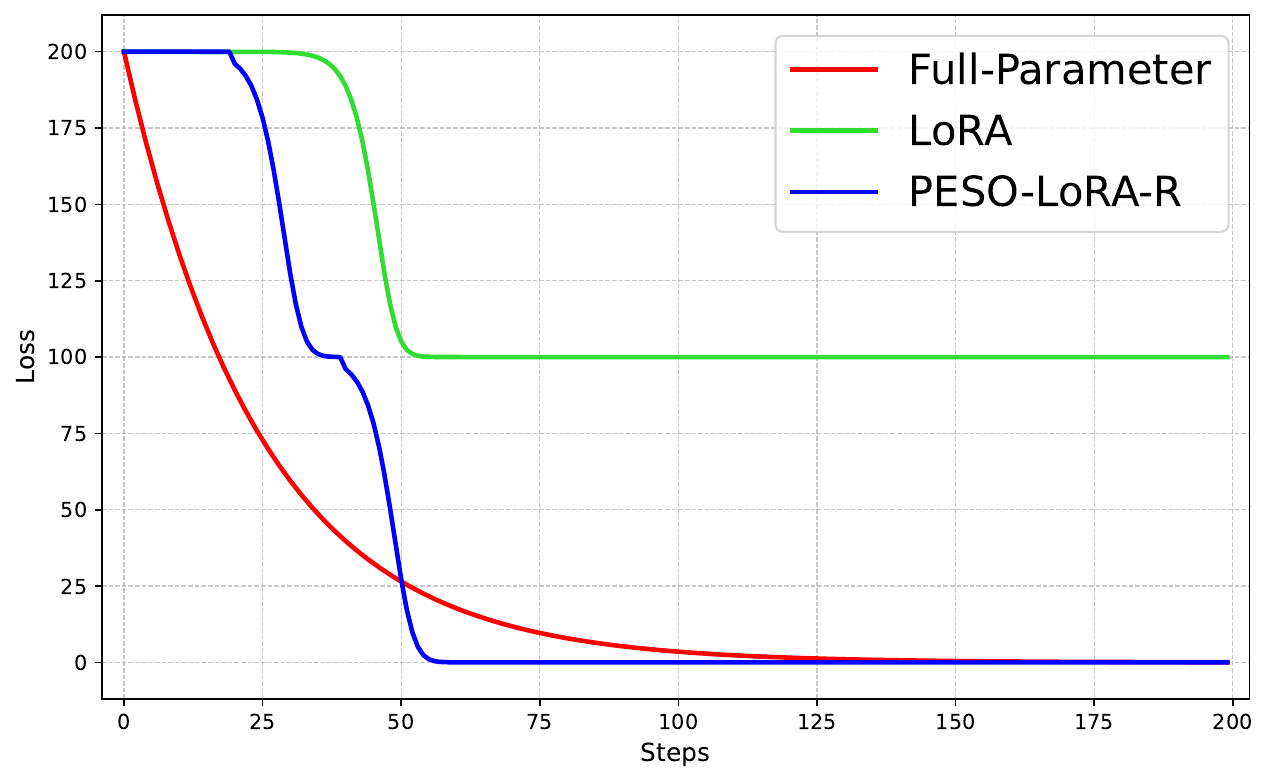}
            \label{fig:synthetic_matrix}
        \end{subfigure}
    \end{minipage}

    \caption{Comparison of full-parameter tuning, LoRA, and our method (PESO-LoRA). 
    Left: optimization trajectories. 
    Top Right: MetaMathQA. 
    Bottom Right: synthetic example $\min_{W}\|W-M\|_F^2$ with $M = 10 \cdot \operatorname{diag}(1,\dots,1,0,\dots,0)$ ($r{+}1$ ones); see Appendix~\ref{sec:synthetic}. 
    PESO-LoRA bridges the loss gap of LoRA while preserving memory and computation efficiency.}
    \label{fig:examples}
\end{figure*}

A popular PEFT method is low-rank adaptation (LoRA, \cite{hu2022lora}), where matrices in $\Delta W$ are expressed as the product of two low-rank factors.  
LoRA has shown strong empirical success, reducing memory costs while achieving competitive downstream performance.  
However, it suffers from two key limitations:  
1) performance often lags behind full-parameter fine-tuning (Figure~\ref{fig:examples}, top right: MetaMathQA);  
2) theoretical guarantees are limited, with convergence typically shown only for the low-rank factors (Figure~\ref{fig:examples}, bottom right: a synthetic example illustrating LoRA’s potentially unbounded loss gap).  
To address these issues, many LoRA variants \citep{hayou2024lora+,wang2024loraga,wang2024lora,zhang2023adalora,zhang2025one} have been proposed, yet they largely inherit the same shortcomings and leave the following fundamental question open:
\begin{center}
\it Can we design fine-tuning methods that maintain the {practical performance} of LoRA while still enjoying the convergence and optimality of full-parameter fine-tuning?
\end{center}

To address this question, we reveal an inherent connection between {PEFT} and the classical idea of \textbf{subspace minimization}, a long-standing nonlinear optimization strategy dating back to \citet{conn1994iterated,cragg1969study}. 
The central philosophy is to decompose a large-scale problem like~\eqref{opt_original} into {\bf iterative, simpler} subproblems constrained to carefully chosen subspaces. 
This view resonates naturally with modern PEFT methods, which restrict updates to structured low-rank forms for better efficiency. 
Interestingly, subspace minimization historically received less attention in the optimization society, since full-parameter information were often affordable in traditional applications.
However, it is especially well suited to LLM training, where {\it massive dimensionality} calls for {\it memory-efficient} methods.


Formally, we build on the notion of \textit{intrinsic dimensionality} in LLM training \citep{aghajanyan2020intrinsic,li2018measuring}, recognized in \citep{hu2022lora} as the origin of LoRA: there exists a {dimension-lifting} map $\mathcal{M}: \mathbb{R}^d \to \mathbb{R}^{m \times n}$, with $d \ll m \times n$, such that the optimal solution $\Delta W^*$ of~\eqref{opt_original} satisfies
\begin{equation}\label{eq:intrinsic_dim}
\textstyle
    \Delta W^* \approx \mathcal{M}(\xi^*), 
    \, \xi^* := \arg\min_{\xi \in \mathbb{R}^d} \ell(W_0 + \mathcal{M}(\xi)).
\end{equation}
{Here, $d$ stands for the number of trainable parameters}, and this characterization implies that it suffices to optimize within the reduced space defined by {the image of} $\mathcal{M}$ to approximate $\Delta W^*$.  
For clarity, we focus on a single weight matrix $\Delta W \in \mathbb{R}^{m \times n}$ (multi-layer extensions are straightforward) and represent $\xi$ as a $d$-dimensional vector. This is without loss of generality, since tensor parameters can always be flattened via vectorization into an isomorphic Euclidean space.  
For example, LoRA adopts the simple form $\mathcal{M}(A,B) = AB$ with $A \in \mathbb{R}^{m \times r}$, $B \in \mathbb{R}^{r \times n}$, and $d = (m+n)r$.
{However, it remains unclear whether such a simple $\mathcal{M}$ is sufficient to capture the complexity of LLM training dynamics.}

Our framework approximates $\mathcal{M}$ adaptively through a \textit{sequential subspace approximation}, providing a more effective capture of~\eqref{eq:intrinsic_dim}.
We construct a sequence of maps $\{\mathcal{M}_k\}$, each with a simple representation {and $\mathbb{R}^d \to \mathbb{R}^{m \times n}$},
\begin{equation}\label{eq:intrinsic_subspace}
\begin{array}{l}
\Delta W^* \approx \sum_k \mathcal{M}_k(\xi_k^*), \\
\xi_k^* := \arg\min_{\xi}
\ell\!\left(
W_0 + \sum_{i=1}^{k-1} \mathcal{M}_i(\xi_i^*)
+ \mathcal{M}_k(\xi)
\right).
\end{array}
\end{equation}
Each {image of} $\mathcal{M}_k$ approximates a subspace and $\xi_k^*$ is its low-dimensional coordinate. 
In essence, the complexity of $\mathcal{M}$ is captured by a sequence of piecewise-linear subspaces. 
This philosophy parallels classical approximation schemes in numerical analysis such as finite element methods \citep{bathe2006finite}.

Guided by this perspective, we develop a principled framework for PEFT grounded in subspace minimization, named {\bf P}arameter-{\bf E}fficient {\bf S}ubspace {\bf O}ptimization ({\bf PESO}).
A key insight is to view the problem~\eqref{eq:intrinsic_subspace} through an \textbf{exploration–exploitation} lens: 
\emph{exploration} designs new subspaces that capture full gradient information, while \emph{exploitation} optimizes efficiently within the current subspace. 
This resolves LoRA’s two central limitations: lack of full-parameter convergence guarantees and inefficiency from rigid low-rank parameterization; see Figure~\ref{fig:examples}.

\noindent {\bf Contributions}.
Our contributions can be summarized at three levels.
Although our focus is on PEFT, many of the ideas developed here naturally extend to pre-training.
\paragraph{I. Perspective Level.} 
We introduce a novel framework PESO for memory-efficient training inspired by classical subspace minimization \citep{conn1994iterated}, unifying existing PEFT approaches such as LoRA variants \citep{hu2022lora,wang2024loraga,wang2024lora,zhang2023adalora,zhang2025one} and GaLore \citep{zhao2024galore}. 
This framework allows us to explore the rich algorithmic techniques in subspace methods, providing systematic guidance to improve memory-efficient methods. 
In particular, we highlight two complementary directions: \textit{exploration} of new subspaces through information from the full gradient, and \textit{exploitation} of the current subspace via streaming SVD representations.

\paragraph{II. Theoretical Level.} 
Our exploration mechanism, \textit{full gradient restart}, enables the framework to effectively guide training dynamics.
The resulting algorithm is tailored to LLM fine-tuning and combines the empirical efficiency of LoRA-style designs with convergence guarantees stated in the \emph{full-parameter space} for the induced update.
This aspect is largely absent from existing LoRA-style analyses, which typically track only the low-dimensional factors.

\paragraph{III. Empirical Level.} 
Guided by our PESO framework, we show that two practical instantiations of our framework---{\tt PESO-LoRA-R} and {\tt PESO-LoRA-T}---achieve improved performance while preserving the memory efficiency of state-of-the-art PEFT methods across benchmarks such as GLUE, mathematical reasoning, code generation, and general instruction tuning.

\noindent {\bf Related Work}.
LoRA \citep{hu2022lora} is perhaps the most widely known PEFT method, and numerous variants have been proposed for better performance. 
For instance, LoRA+ \citep{hayou2024lora+} introduces imbalanced learning rates; 
PiSSA \citep{meng2024pissa} proposes an initialization from SVD of $W_0$; 
and AdaLoRA \citep{zhang2023adalora} maintains an adaptive SVD-based low-rank representation. 
Other extensions focus on gradient scaling \citep{tastan2025loft,zhang2024riemannian}. 
More recent work leverages information from the full gradient: LoRA-GA \citep{wang2024loraga} and LoRA-Pro \citep{wang2024lora} propose memory-efficient gradient approximations, and LoRA-One \citep{zhang2025one} employs the SVD of the full gradient for initialization.

One closely related line of work studies memory-efficient training via projection or subspace updates in the pre-training regime.
GaLore \citep{zhao2024galore} constructs the projection subspace using a top-$r$ SVD of the full gradient, while Fira \citep{chen2024fira} augments this with a gradient-correction step to reduce projection bias. 
SARA \citep{zhang2025breaking} employs importance sampling to choose subspaces based on the SVD spectrum of the full gradient. 
APOLLO \citep{zhu2024apollo} instead uses randomized projections to define the subspace, and SubTrack++ \citep{rajabi2025subtrack++} explores Grassmannian subspaces for improved tracking.
While these pre-training methods can be adapted to PEFT, they are not designed to exploit the strong intrinsic dimensionality~\eqref{eq:intrinsic_dim} in PEFT tasks and can underperform LoRA-style parameterizations; see Table~\ref{tab:subspace_comparison}.
In contrast, PESO builds on classical subspace minimization \citep{conn1994iterated}, embedding LoRA-type structures and viewing fine-tuning as sequential subproblems that iteratively capture intrinsic dimensionality \eqref{eq:intrinsic_subspace}.

Convergence guarantees for PEFT algorithms remain scarce, and existing results typically address only the low-dimensional parameters \citep{jiang2024unified}.
A closely related line of work studies \emph{randomized subspace descent} \citep{kozak2019stochastic}, including RSO \citep{chen2025memory} and GoLore \citep{he2024subspace}, which construct subspaces via randomized projections and provide convergence guarantees in the full-parameter space.
Their exploration strategies differ from ours and they typically do not encode PEFT-specific LoRA structures.
Other approaches require additional structural conditions, e.g., \citet{liang2024memory} assumes the projection subspace has full column rank (unrealistic when $r<m$) and LDAdam \citep{robert2024ldadam} assumes a strict contraction.
Closer to LoRA, \citet{jang2024lora} analyze LoRA only in the NTK regime, and RAC-LoRA \citep{malinovsky2024randomized} proposes a randomized asymmetric-chaining variant, a different exploration mechanism from ours.

Subspace minimization is a classical theme in nonlinear optimization \citep{conn1994iterated,cragg1969study,yuan2014review}. 
It was historically overshadowed by full-parameter algorithms such as L-BFGS \citep{liu1989limited} and conjugate gradient methods \citep[Ch. 5]{nocedal1999numerical}, since many traditional applications could afford storing full gradients and quasi-Newton pairs. 
More recently, however, subspace-based strategies have re-emerged in large-scale derivative-free optimization \citep{cartis2023scalable,dzahini2024stochastic,menickelly2024augmenting,nozawa2025zeroth,zhang2025scalable}, where gradients are unavailable and low-dimensional surrogates are crucial.

\section{PESO: A Framework From Subspace Minimization}

In this section, we provide a novel perspective of PEFT methods with insights from subspace minimization.
We summarize an algorithmic framework {\bf P}arameter-{\bf E}fficient {\bf S}ubspace {\bf O}ptimization ({\bf PESO}) in Algorithm~\ref{alg:generic_subspace}, and discuss how it unifies many benchmarks such as LoRA and GaLore.

To build an iterative scheme, a central question is how to represent the weight $W$ at each iteration using low-dimensional representation $\xi$.
{In~\eqref{eq:intrinsic_subspace}, the optimization is expressed through evolving subspaces.  
At iteration $k$, we define the {\it anchored state} $\tilde W_k := W_0 + \sum_{i=1}^{k-1} \mathcal{M}_i(\xi_i^*)$ to encode historical progress, and represent}
\begin{equation}\label{eq:representation_generic}
    W_k = \tilde W_k + \mathcal{M}_k(\xi_k).
\end{equation}
Following the design of subspace minimization, PESO considers each $\mathcal{M}_k$ to admit a simple image in the form of a subspace: $\mathcal{S}_k := \{\mathcal{M}_k(\xi) : \xi \in \mathbb{R}^d\}$.

Under representation~\eqref{eq:representation_generic}, the evolution of $W_k$ can be viewed as three complementary operations:
1) {\textbf{exploration}:} updating $\mathcal{M}_k$ to select a new subspace $\mathcal S_k$, {(line~\ref{line:explore} of Algorithm~\ref{alg:generic_subspace})}
2) {\textbf{exploitation}:} optimizing $\xi_k$ within the current subspace {(line~\ref{line:exploit1}-\ref{line:exploit2} of Algorithm~\ref{alg:generic_subspace})}, and
3) updating $\tilde W_k$ to absorb past progress into the anchored weights.
{These operations} mirror the classical paradigm of \textbf{subspace minimization} \citep{conn1994iterated}, where a large-scale problem is solved by iteratively: (i) constructing a subspace based on local information such as gradients, (ii) solving a reduced subproblem within that subspace, (iii) updating the iterate to incorporate the subspace solution. 

In our design, exploration and exploitation directly parallel subspace selection and subproblem optimization, while the anchored state $\tilde W_k$ retains progress from earlier subspaces. 
In LoRA, $\tilde W$ is fixed at $W_0$, confining progress to the active subspace. 
In contrast, updating $\tilde W$ absorbs accumulated contributions back into the parameter space, giving rise to two distinct exploration strategies: \emph{warm-start} and \emph{restart}, which we detail below in Section~\ref{sec:explore_exploit}.

Leveraging this connection to subspace minimization, we present our generic framework PESO in Algorithm~\ref{alg:generic_subspace}.
It is important to note that, by selecting corresponding parameterization of $\mathcal{M}_k$, {\tt UpdateM}, and {\tt Opt}, we are able to recover a variety of existing benchmarking methods in parameter-efficient training; see representatives in Table~\ref{tab:methods}.
We also remark that Algorithm~\ref{alg:generic_subspace} is equivalent to the classical two-loop subspace minimization scheme \citep{conn1994iterated}, which we defer to Appendix~\ref{sec:subspace_review} in Algorithm~\ref{alg:subspace_algo}.
\begin{algorithm}[ht]
\caption{{\bf PESO}: Generic Framework of {\bf P}arameter-{\bf E}fficient {\bf S}ubspace {\bf O}ptimization}
\label{alg:generic_subspace}
\begin{algorithmic}[1]
    \Require Initialization $W_0\in\mathbb{R}^{m\times n}$, $\xi_0\in\mathbb{R}^d$, and $\mathcal{M}_0$; 
    an algorithmic subroutine \texttt{UpdateM}, 
    an optimizer \texttt{Opt}, frequency $K$.
    \State Set $k \gets 1$ and $\tilde W_0\gets W_0$.
    \While{stopping criteria not satisfied}
        \State {$(\mathcal{M}_{k}, \tilde W_{k}) \gets (\mathcal{M}_{k-1}, \tilde W_{k-1})$.}
        \If{$k-1 \bmod K = 0$} \Comment{{\it Explore} new $\mathcal{S}_{k}$}
            \State $(\mathcal{M}_{k},\tilde W_{k})$ $\gets {\texttt{UpdateM}}(\mathcal{M}_{k-1},\tilde W_{k-1})$.\label{line:explore}
        \EndIf
        \State $\Delta\xi_{k} \gets {\texttt{Opt}}(\xi_{k-1}, \mathcal{M}_{k})$ \label{line:exploit1}\Comment{{\it Exploit} current $\mathcal{S}_k$}
        \State $\xi_{k} \gets \xi_{k-1} + \Delta \xi_{k}$. \label{line:exploit2}
        \State $k \gets k+1$.
    \EndWhile
\end{algorithmic}
\end{algorithm}

{\subsection{Subspace Exploration-Exploitation in PESO}\label{sec:explore_exploit}}
Now let us discuss two main components of our framework, subspace exploration and exploitation.
\paragraph{Subspace Exploration.}
Exploring new subspaces is essential for navigating the full-parameter space under memory restriction. 
Algorithm~\ref{alg:generic_subspace} carries out exploration by \texttt{UpdateM}, which updates both $\mathcal{M}_k$ and $\tilde W_k$. 
Such updates are often performed lazily every $K$ iterations, as in prior works \citep{liang2024memory,zhang2023adalora,zhao2024galore,zhu2024apollo}.

Depending on how much $\mathcal{M}_k$ is changed, two philosophies arise for how exploration interacts with the low-dimensional $\xi_k$: \emph{warm-start} and \emph{restart}.
These are simply two modes of \texttt{UpdateM}:
\begin{itemize}[topsep=0pt]
    \item \textbf{Warm-start.} 
    Preserve $\xi_k$ and keep $\tilde W_k$ fixed:
    \begin{equation}\label{eq:warm_start}
        W_{k+1} = \tilde W_k + \mathcal{M}_{k+1}(\xi_k + \Delta \xi_k).
    \end{equation}
    \item \textbf{Restart.} 
    Absorb the previous contribution into the baseline, $\tilde W_{k+1} \gets \tilde W_k+\mathcal{M}_k(\xi_k)$, and start the new subspace from $\xi_{\text{new}}$ (often $0$):
    \begin{equation}\label{eq:restart}
        W_{k+1} = \tilde W_{k+1} + \mathcal{M}_{k+1}(\xi_{\text{new}} + \Delta \xi_k).
    \end{equation}
\end{itemize}
Intuitively, warm-start provides smoother transitions when consecutive subspaces remain similar, while restart marks a new phase, useful when the optimization geometry changes sharply.  
In practice, these two modes naturally lead to two main approaches for designing {\tt UpdateM}.  
A warm-start typically updates the parameterization of $\mathcal{M}_k$ smoothly along an {\it optimization trajectory}—for example, by applying an Adam step on subspace parameters as in LoRA variants—yielding a gradually evolving subspace.  
Restart, on the other hand, often {\it reassigns} $\mathcal{M}_k$ directly using local information such as gradients.
This strategy is common in classical optimization; for example, in line search (a one-dimensional subspace method) each iteration resets the step size initialization when a new direction is chosen \citep[Ch.~3]{nocedal1999numerical}.
It is also used in LLM training, as in GaLore \citep{zhao2024galore}, which periodically resets the subspace via the SVD of the full gradient. 
Concrete examples of both approaches are summarized in Table~\ref{tab:methods}, and Section~\ref{sec:full_grad_restart} introduces a new restart scheme leveraging full gradients.

\paragraph{Subspace Exploitation.} 
Between two updates of \texttt{UpdateM}, our framework performs $K$ iterations of \texttt{Opt} within the current subspace $\mathcal{S}_k$. 
This amounts to solving the subproblem
\begin{equation}
   \textstyle \min_{\xi \in \mathbb{R}^d} \,\, \ell(\tilde W_{k} + \mathcal{M}_{k}(\xi))
\end{equation}
approximately for $K$ steps. 
In practice, \texttt{Opt} is often chosen as Adam. 

The philosophy relies on a common belief in classical optimization: during an optimization procedure, once an effective subspace is identified, repeatedly exploiting it for multiple iterations improves efficiency.
This principle underlies many classical optimization methods, such as {trust-region methods \citep[Ch.~4]{nocedal1999numerical} and L-BFGS-B \citep{byrd1995limited}.}


\subsection{Connection to Existing Benchmarks}\label{sec:connection}
While~\eqref{eq:representation_generic} may strike to be abstract, many existing benchmarks for LLM training can naturally fit in it by considering specific subspaces.
Here we summarize several notable methods in Table~\ref{tab:methods}.

\newcolumntype{L}[1]{>{\raggedright\arraybackslash}p{#1}}
\newcolumntype{Y}{>{\raggedright\arraybackslash}X}

\begin{table*}[t]
\centering
\caption{Examples of memory-efficient training methods as instances of PESO.}
    \vskip -0.1in
\label{tab:methods}
\renewcommand{\arraystretch}{1.15}
\small
\begin{tabularx}{\linewidth}{L{2.5cm} L{0.8cm} L{1.0cm} Y p{3.0cm} >{\centering\arraybackslash}m{1.5cm}}
\toprule
\multicolumn{1}{l}{\textbf{Methods}} & 
\multicolumn{1}{l}{$\boldsymbol{\xi}$} & 
\multicolumn{1}{c}{$\boldsymbol{\mathcal{M}_k(\xi)}$} & 
\multicolumn{1}{c}{$\boldsymbol{\mathcal{S}_k}$} & 
\multicolumn{1}{c}{\textbf{\texttt{UpdateM}}} & 
\multicolumn{1}{c}{\textbf{Init.}} \\
\midrule
LoRA & $(A,B)$ &
$AB$ &
$\{A_kB + AB_k : A\!\in\!\mathbb{R}^{m\times r}, B\!\in\!\mathbb{R}^{r\times n}\}$ &
Adam for $A_k,B_k$ & \cellcolor{red!15}warm-start \\
\addlinespace
AdaLoRA & $\Lambda$ &
$P_k \Lambda Q_k$ &
$\{P_k \Lambda Q_k : \Lambda \in \mathbb{R}^{r\times r}\ \text{diagonal}\}$ &
SGD for $P_k, Q_k$ & \cellcolor{red!15}warm-start \\
\addlinespace
GaLore & $R$ &
$P_k R$ &
$\{P_k R : R \in \mathbb{R}^{r\times n}\}$ &
$P_k$: left $r$-SVD of $G_k$ & \cellcolor{blue!15}restart \\
\addlinespace
\cite{kozak2019stochastic} & $R$ &
$P_k R$ &
$\{P_k R : R \in \mathbb{R}^{r\times n}\}$ &
randomly sample $P_k$ & \cellcolor{blue!15}restart \\
\addlinespace
\cite{liang2024memory} & $R$ &
$P_k R$ &
$\{P_k R : R \in \mathbb{R}^{r\times n}\}$ &
online PCA of $P_k$ & \cellcolor{red!15}warm-start \\
\bottomrule
\end{tabularx}
    \vskip -0.18in
\end{table*}

\noindent \textbullet\ \underline{\textit{Projected subspace.}}
A simple way to define a memory-efficient subspace is through low-rank projection, where $\mathcal{M}_k:R\in\mathbb{R}^{r\times n}\mapsto P_kR$ is parameterized by a left-projection matrix $P_k\in\mathbb{R}^{m\times r}$. 
This formulation can be extended to right-sided or two-sided projections. 
By applying the chain rule to $\nabla_R\ell(\tilde W_k+P_kR)$, one obtains the projected subspace schemes analyzed in \citep{he2024subspace,kozak2019stochastic,liang2024memory,zhao2024galore}; see Appendix~\ref{sec:memory_eff_variant} for details. 
Within PESO, GaLore \citep{zhao2024galore}, APOLLO \citep{zhu2024apollo}, and stochastic subspace descent \citep{kozak2019stochastic} correspond to a \emph{restart} strategy by reassigning $P_k$, while online subspace descent \citep{liang2024memory} adopts a \emph{warm-start} update of $P_k$ via online PCA. 


\noindent \textbullet\ \underline{\textit{Low-rank subspace.}}
The LoRA family defines the subspace $\{A_kB + AB_k : A\!\in\!\mathbb{R}^{m\times r},\, B\!\in\!\mathbb{R}^{r\times n}\}$, where the adapters $(A,B)$ jointly serve as both $\xi$ and the parameterization of $\mathcal{M}_k$. 
Consequently, a single Adam update of $(A,B)$ simultaneously updates the subspace and its coordinates, effectively realizing a \emph{warm-start} scheme with $K=1$. 
Many LoRA variants can be viewed as modifications of this generic template: LoRA-Pro \citep{wang2024lora} applies a different preconditioner, and PiSSA \citep{meng2024pissa}, LoRA-GA \citep{wang2024loraga}, and LoRA-One \citep{zhang2025one} adjust initialization strategies, while other works modify learning rates or scaling rules. 
Our framework unifies these designs by interpreting them as \emph{specific choices of  \texttt{Opt} or initialization} within the same subspace structure.


\noindent \textbullet\ \underline{\textit{SVD subspace.}}
A principled way to extract low-dimensional structure from matrices (such as $\Delta W$) is through Singular Value Decomposition (SVD), leading to the representation $\mathcal{M}_k:\lambda\in\mathbb{R}^r \mapsto U \mathrm{diag}(\lambda) V$. 
Here, $(U,V)$ define the subspace (exploration), while $\lambda$ is the low-dimensional coordinates (exploitation). 
This separation fits directly into Algorithm~\ref{alg:generic_subspace}, enabling flexible optimization strategies for $(U,V)$ and $\lambda$. 
AdaLoRA \citep{zhang2023adalora} exemplifies this parameterization, and our framework clarifies the roles of $(P_k,\Lambda_k,Q_k)$ in their notation. 
We build on this principle in Section~\ref{sec:algorithms}, where we propose a practical SVD-based variant {\tt PESO-LoRA-T}.

{Together, these three subspace categories illustrate how PESO unifies existing PEFT methods under a single framework.
This perspective guides the design of practical algorithms with new exploration and exploitation techniques for PEFT, and it sets up the convergence analysis in the sections that follow.
}

\section{PESO-LoRA: Practical Algorithms from the Framework}\label{sec:algorithms}
The previous section establishes a conceptual link between PEFT methods and classical subspace minimization, providing a unifying interpretation. 
Building on this view, we now develop a concrete algorithm {\bf PESO-LoRA}, which combines PESO’s optimization principles with LoRA’s empirically effective PEFT parameterization.
We present two variants: 
{\tt PESO-LoRA-R} leverages a full gradient restart strategy to improve \emph{exploration} of subspaces, 
and {\tt PESO-LoRA-T} is a SVD-based method that enhances \emph{exploitation} through more effective optimization within each subspace.

\subsection{Full Gradient Restart}\label{sec:full_grad_restart}
We now introduce an important variant of the {\tt UpdateM} subroutine in the restart category (see~\eqref{eq:restart}) that enables convergence to stationarity in the full-parameter space.
We design {\tt UpdateM} so that each new subspace $\mathcal{S}_k$ induced by $\mathcal{M}_k$ remains well aligned with the current full gradient $G_k$. 
We call this scheme {\emph{full gradient restart}}:

\noindent\textbf{Full Gradient Restart.} Given learning rates $\{\eta_k\}$, whenever $k-1 \bmod K = 0$:
\begin{enumerate}[label=\arabic*), itemsep=0pt, topsep=2pt]
    \item Absorb history: $\tilde W_{k} \gets \tilde W_{k-1} + \mathcal{M}_{k-1}(\xi_{k-1})$.
    \item Compute the (stochastic) full gradient $G_k = \nabla_W \ell(\tilde W_{k})$.
    \item Choose a low rank subspace $\mathcal{S}_k^{\text{FG}}$ depending on $G_k$. 
    \item Restart with $\xi_k\gets \xi_k^{\text{new}}$ such that $\mathcal{M}_k(\xi_k^{\text{new}})=-\eta_k P_{\mathcal{S}_k^{\text{FG}}}(G_k)$. 
\end{enumerate}
Here, {$P_{\mathcal{S}_k^{\text{FG}}}(G_k)$ denotes the projection of $G_k$ onto $\mathcal{S}_k^{\text{FG}}$}. 
This procedure effectively redefines $\mathcal{M}_k$ so that the new adapter is initialized by a {\it projected gradient step}:
\begin{equation}\label{eq:grad_restart_generic}
 \textstyle   W_{k} \gets W_{k-1} - \eta_k P_{\mathcal{S}_k^{\text{FG}}}(G_k).
\end{equation}
Thus, each restart ensures that $\mathcal{S}_k$ captures information from full gradients, with initial progress comparable to a standard SGD step.
In the literature on subspace methods, incorporating the full gradient into $\{\mathcal{S}_k\}$ is critical for convergence guarantees \citep{conn1994iterated,zhang2025scalable}. 
In particular, one can show that $\|\nabla_W \ell\| \to 0$ provided that $G_k := \nabla_W \ell(W_k)$ lies in $\mathcal{S}_k$. 
Building on this, we demonstrate in Section~\ref{sec:analysis} that full gradient restart ensures convergence to a stationary point of the original problem~\eqref{opt_original} by interleaving projected steepest-descent steps with subspace updates.

\begin{algorithm}[ht]
\caption{{\tt PESO-LoRA-R}: {\bf PESO} with {\bf LoRA} and Subspace Explo{\bf R}ation}
\label{alg:lora_restart}
\begin{algorithmic}[1]
    \Require Pre-trained parameters $W_0\in\mathbb{R}^{m\times n}$, frequency $K$, scale parameter $\gamma$.
    \State Set $k \gets 1$, $\tilde W_0\gets W_0$, $A_0\gets 0$ and $B_0\gets 0$.
    \While{stopping criteria not satisfied}
    \If{$k-1\mod{K}=0$}
    \State $\tilde W_k\gets \tilde W_{k-1}+A_{k-1}B_{k-1}$.
    \State Compute stochastic full gradient $G_k$.
    \State $(U_k,\Lambda_k,V_k)\gets {\tt SVD}(-G_k)$. 
    \State $A_{k-1}\gets \frac{1}{\sqrt{\gamma}} U_k\sqrt{\Lambda_k}$,  $B_{k-1}\gets  \frac{1}{\sqrt{\gamma}} \sqrt{\Lambda_k}V_k$.
    \EndIf 
    \State $(A_k, B_k)\gets \texttt{AdamW}(A_{k-1},B_{k-1})$. 
    \State $k \gets k+1$.
    \EndWhile 
    \State \Return $\tilde W_k+A_kB_k$.
\end{algorithmic}
\end{algorithm}

A practical construction of $\mathcal{S}_k^{\text{FG}}$ is to compute a rank-$r$ SVD of $G_k$ and define the subspace as the span of its top singular directions. 
This ensures that $\mathcal{S}_k^{\text{FG}}$ captures the main structure of $G_k$, while the approximation error $\|G_k - P_{\mathcal{S}_k^{\text{FG}}}(G_k)\|$ is governed by the spectral tail of $G_k$. 
Crucially, this tail is {\it independent} of the rank gap in the objective, underscoring a key distinction between representation deficiency (e.g., LoRA) and update efficiency. 
In practice, {given $(U_k,V_k)$ from the rank-$r$ SVD of $G_k$}, one can also restart with $\mathcal{M}_k(\xi_k^{\text{new}})=-\eta_k U_k V_k$, which corresponds to the update from the recent training benchmark Muon \citep{jordan2024muon}, providing improved stability over~\eqref{eq:grad_restart_generic}.

One practical advantage of full gradient restart is that it acts as a {\bf ``plug-and-play''} mechanism for existing PEFT methods. 
It can be applied with a moderate frequency $K$ to reduce the cost of SVD while still guiding subspace exploration effectively. 
Recent work, such as \citep{zhang2025one,zhao2024galore}, has demonstrated the empirical benefits of leveraging the full gradient.
In particular, the recent variants LoRA-GA \citep{wang2024loraga} and LoRA-One \citep{zhang2025one} can be interpreted as special cases of LoRA with full gradient restart applied {\it only at initialization}.
To achieve the full-parameter convergence, we propose {\tt PESO-LoRA-R} (Algorithm~\ref{alg:lora_restart}), which embeds LoRA with a restart mechanism every $K$ iterations.
Here, {\tt SVD}$(-G_k)$ returns the top-$r$ SVD of $-G_k$.
{A detailed version of the pseudocode can be found in Appendix~\ref{app:algo_details}.}

To implement Algorithm~\ref{alg:lora_restart}, directly assigning $(A_{k-1},B_{k-1})\gets 1/{\sqrt{\gamma}} (U_k\sqrt{\Lambda_k},\sqrt{\Lambda_k}V_k)$ can cause instability due to mismatches in optimization states.
{A similar instability issue has also been reported for GaLore \citep{chen2024fira}.
For robustness, we propose alignment techniques to maintain consistency of subspace bases, momentum, and velocity.
In particular, restarts often produce gradients with much larger magnitudes, leaving the Adam velocity ``too cold'' and causing unstable steps. We therefore apply a lightweight {\it velocity alignment}, rescaling the velocity to match the new gradient magnitude, $v \gets \|g\|^2/\|v\|\,v$, together with a short $\beta_2$ warm-up, which we find to be the most important component for stable performance in practice; see more details in Appendix~\ref{sec:implementation}.}
Finally, we remark both empirical evidence and theoretical results suggest that gradients $G_k$ in deep learning often have strong low-rank structure, making them especially suitable for efficient SVD-based approximations \citep{cosson2023low,yang2023spectral,zhao2024galore}.


\subsection{Exploitation via SVD Subspace}\label{sec:exploit_algo}
Having discussed exploration techniques inspired by subspace minimization, we now turn to the complementary philosophy: {exploitation within the current subspace}.

\begin{algorithm}[ht]
\caption{{\tt PESO-LoRA-T}: {\bf PESO} with {\bf LoRA} and Subspace Exploi{\bf T}ation}
\label{alg:SVD_exploit}
\begin{algorithmic}[1]
    \Require Pretrained weights $W_0\in\mathbb{R}^{m\times n}$, initial subspace matrices $U_0\in\mathbb{R}^{m\times r}$, $V_0\in\mathbb{R}^{r\times n}$, initial coordinate $\xi_0\in\mathbb{R}^r$, frequency $K$.
    \State Set $k \gets 1$.
    \While{stopping criterion not met}
        \State Keep $(U_{k},V_{k}) \gets (U_{k-1},V_{k-1})$.
        \If{$k-1 \bmod K = 0$}
            \State $(U_{k},V_{k})\gets \texttt{AdamW}(U_{k-1},V_{k-1})$. 
        \EndIf
        \State $\xi_{k}\gets \texttt{AdamW}(\xi_{k-1})$. 
        \State $k \gets k+1$.
    \EndWhile
    \State \Return $W_0 + U_k \operatorname{diag}(\xi_k) V_k$.
\end{algorithmic}
\end{algorithm}

As outlined in Section~\ref{sec:connection}, an SVD-based parameterization provides a clean and principled way to define $\mathcal{M}_k$. 
Specifically, we approximate the target mapping $\mathcal{M}(\xi^*)$ by a sum of rank-$r$ components, $\sum_k U_k \xi^*_k V_k$, where each pair $(U_k,V_k)$ defines an {\it SVD subspace} of rank $r$. 
Because SVD naturally captures the dominant gradient directions, this parameterization ensures that exploitation is focused on the most informative directions in the weight space. 

Within each subspace, we optimize the low-dimensional $\xi$ for $K$ steps using AdamW. 
This design can be viewed as an extension of LoRA, with the key difference that the SVD structure explicitly decouples subspace exploitation (through $\xi$) from exploration (through $(U,V)$). 
The practical variant is summarized in Algorithm~\ref{alg:SVD_exploit}. A small frequency $K$ (e.g., $1$ or $2$) often suffices for strong performance without significant overhead.
{A detailed version of Algorithm~\ref{alg:SVD_exploit} can be found in Appendix~\ref{app:algo_details}.}


\section{Experiments}\label{sec:experiments} 
In this section, we conduct experiments to evaluate our methods across diverse tasks and models, comparing with standard LoRA-based approaches and full fine-tuning. 
We first assess natural language understanding on the GLUE benchmark~\citep{wang2018glue} by fine-tuning T5-base~\citep{raffel2020exploring}. 
We then evaluate natural language generation on Llama models for tasks including mathematical reasoning, code generation, and general instruction tuning. 
Finally, we demonstrate that \(\texttt{LoRA-PESO-R}\) remains effective even under strict memory constraints when trained for more epochs. Implementation details are provided in Appendix~\ref{app:experiments}. 

\begin{table*}[ht]
\centering
\caption{Fine-tuned T5-base on natural language understanding tasks with rank set to 8. Results are reported as accuracy (\%) over 3 runs. \textbf{Bold} and \underline{underline} indicate the highest and second-highest accuracies \emph{excluding} {\tt PESO-LoRA-T}, which is shaded in gray and omitted from direct comparison due to its longer runtime.
}
\label{tab:result_nlu}
\begin{tabular}{lccccc}
\toprule
Method & MNLI & SST-2 & CoLA & QNLI & MRPC  \\
\midrule
LoRA       & 85.30$_{\pm0.04}$ & 94.04$_{\pm0.09}$ & 72.84$_{\pm1.25}$ & 93.02$_{\pm0.07}$ & 68.38$_{\pm0.01}$  \\
LoRA+      & 85.81$_{\pm0.09}$ & 93.85$_{\pm0.24}$ & 77.53$_{\pm0.20}$ & 93.14$_{\pm0.03}$ & 74.43$_{\pm1.39}$  \\
P-LoRA     & 85.28$_{\pm0.15}$ & 93.88$_{\pm0.11}$ & 79.58$_{\pm0.67}$ & 93.00$_{\pm0.07}$ & 83.91$_{\pm1.16}$  \\
PiSSA      & 85.75$_{\pm0.07}$ & 94.07$_{\pm0.06}$ & 74.27$_{\pm0.39}$ & 93.15$_{\pm0.14}$ & 76.31$_{\pm0.51}$ \\
LoRA-GA    & 85.70$_{\pm0.09}$ & 94.11$_{\pm0.18}$ & 80.57$_{\pm0.20}$ & 93.18$_{\pm0.06}$ & 85.29$_{\pm0.24}$ \\
LoRA-Pro   & \underline{86.03}$_{\pm0.19}$ & 94.19$_{\pm0.13}$ & \underline{81.94}$_{\pm0.24}$ & \underline{93.42}$_{\pm0.05}$ & \underline{86.60}$_{\pm0.14}$ \\
LoRA-One   & 85.89$_{\pm0.08}$ & \underline{94.53}$_{\pm0.13}$ & \textbf{82.04}$_{\pm0.22}$ & 93.37$_{\pm0.02}$ & \textbf{87.83}$_{\pm0.37}$ \\
\midrule
{\tt PESO-LoRA-R} & \textbf{86.08}$_{\pm0.15}$ & \textbf{94.61}$_{\pm0.09}$ & 81.50$_{\pm0.16}$ & \textbf{93.43}$_{\pm0.06}$ & 86.36$_{\pm0.11}$\\
\rowcolor{rowgray} 
{\tt PESO-LoRA-T} & 86.09$_{\pm0.04}$ & 94.76$_{\pm0.19}$ & 82.01$_{\pm0.30}$ & 93.45$_{\pm0.03}$ & 87.59$_{\pm0.46}$\\
\bottomrule
\end{tabular}
\end{table*}
\subsection{Natural Language Understanding Tasks}\label{subsec:nlu_tasks}
We fine-tune the T5-base model on a subset of GLUE, including MNLI, SST-2, CoLA, QNLI, and MRPC, and evaluate performance using test accuracy (\%). Following the setting in~\citep{zhang2025one}, we compare our method against several LoRA variants, including LoRA~\citep{hu2022lora}, LoRA+~\citep{hayou2024lora+}, P-LoRA~\citep{zhang2024riemannian}, PiSSA~\citep{meng2024pissa}, LoRA-GA~\citep{wang2024loraga}, LoRA-Pro~\citep{wang2024lora}, and LoRA-One~\citep{zhang2025one}. For fairness, hyperparameters are tuned individually for each method. 

The results are summarized in Table~\ref{tab:result_nlu}. 
{\tt PESO-LoRA-R} and {\tt PESO-LoRA-T} achieve the best performance on three of the five tasks, which also have  {\it larger datasets}. 
On the remaining tasks, {\tt PESO-LoRA-T} ranks second. 
This demonstrates the overall efficiency and robustness of our approaches, with advantages most evident on larger datasets that demand longer training and stronger exploration–exploitation.
Moreover, {\tt PESO-LoRA-T} generally outperforms {\tt PESO-LoRA-R}, but at the cost of \(1.4\times\) more computation time, whereas {\tt PESO-LoRA-R} runs at nearly the same speed as standard LoRA. 
Memory costs are comparable across all methods, so the choice ultimately depends on whether performance or efficiency is prioritized.


\subsection{Natural Language Generation Tasks}
\label{sec:nlg_tasks}

We evaluate our method on the more capable Llama-3.1-8B~\citep{dubey2024llama} model across three downstream tasks. For mathematical reasoning, we fine-tune on a 100k subset of MetaMathQA~\citep{yu2023metamath} and evaluate on GSM8K~\citep{cobbe2021training}. For general instruction tuning, we fine-tune on Alpaca~\citep{alpaca} and evaluate on MMLU~\citep{hendrycks2020measuring}. For code generation, we use a 100k subset of Code-Feedback~\citep{zheng2024opencodeinterpreter} and evaluate on HumanEval~\citep{chen2021evaluating}, reporting PASS@1. To ensure fairness, all datasets are preprocessed to exclude overlaps with test sets. 

The results are reported in Table~\ref{tab:result_nlg_llama3}. \texttt{PESO-LoRA-R} consistently outperforms all LoRA variants across all benchmarks on this advanced model. 
Notably, the gains are most pronounced in mathematical reasoning and code generation—tasks which involves larger fine-tuning datasets, highlighting the substantial advantages of subspace exploration and exploitation for complex tasks. 
For completeness, Table~\ref{tab:result_nlg} in Appendix~\ref{app:nlg_experiments} reports results on Llama-2-7B~\citep{touvron2023llama}, evaluating both \texttt{PESO-LoRA-T} and \texttt{PESO-LoRA-R}, where our methods similarly achieve competitive performance against established baselines. We further compare against pretraining-oriented approaches, including GaLore~\citep{zhao2024galore}, Fira~\citep{chen2024fira}, and APOLLO~\citep{zhu2024apollo}; Table~\ref{tab:subspace_comparison} shows that our PEFT-specialized \texttt{PESO-LoRA-R} consistently outperforms these pretraining baselines in the fine-tuning regime.

We conduct ablations over the rank $r$ and restart frequency $K$. Results in Appendix~\ref{app:nlg_experiments_ablation} show that \texttt{PESO-LoRA-R} achieves strong performance with small ranks and remains robust across a wide range of $K$. Appendix~\ref{app:nlg_experiments_ablation} also reports memory and runtime comparisons, showing a memory footprint similar to vanilla LoRA and comparable overall computational cost. In particular, Table~\ref{tab:ablation_cost} shows that the additional SVD cost over LoRA is only 1.9\% of total runtime in our experiments, indicating negligible overhead.

\begin{table}[htbp]
\centering
\caption{Fine-tuned Llama-3.1-8B on natural language generation tasks with rank set to 8. Results are reported as accuracy (\%) over 3 runs. \textbf{Bold} and \underline{underline} indicate the highest and second-highest accuracies.}
\label{tab:result_nlg_llama3}
{
    \begin{tabular}{c|cccc}
    \toprule
     & LoRA & LoRA-GA & LoRA-One & \texttt{PESO-LoRA-R} \\
    \midrule
    GSM8K & 70.64$_{\pm0.53}$ & 76.67$_{\pm0.31}$ & \underline{77.71}$_{\pm0.17}$ & \textbf{77.79}$_{\pm0.18}$ \\
    MMLU & 63.95$_{\pm0.05}$ & 62.91$_{\pm0.08}$ & \underline{64.33}$_{\pm0.14}$ & \textbf{64.34}$_{\pm0.21}$ \\
    HumanEval & 42.47$_{\pm2.56}$ & 44.32$_{\pm5.64}$ & \underline{45.32}$_{\pm1.52}$ & \textbf{47.15}$_{\pm0.76}$ \\
    \bottomrule
    \end{tabular}
}
\end{table}

\begin{table}[htbp]
\centering
\caption{Fine-tuned T5-base (4 epochs) on the SST-2 dataset. Results are reported as accuracy (\%) over 3 runs. \textbf{Bold} and \underline{underline} indicate the highest and second-highest accuracies.}
\label{tab:epoch_results}
{
    \begin{tabular}{c|cccc}
    \toprule
    Method & Epoch 1 & Epoch 2 & Epoch 3 & Epoch 4 \\
    \midrule
    LoRA ($r=2$) & $93.02_{\pm0.44}$ & $93.47_{\pm0.51}$ & $93.41_{\pm0.12}$ & $93.52_{\pm0.17}$ \\
    LoRA ($r=4$) & $94.23_{\pm0.30}$ & $94.42_{\pm0.15}$ & $94.46_{\pm0.10}$ & $94.61_{\pm0.25}$ \\
    LoRA ($r=8$) & $93.85_{\pm0.30}$ & $94.03_{\pm0.09}$ & $94.54_{\pm0.05}$ & $94.54_{\pm0.23}$ \\
    \texttt{PESO-LoRA-R} ($r=2$) & $\underline{94.30}_{\pm0.15}$ & $\underline{94.47}_{\pm0.08}$ & $\underline{94.84}_{\pm0.25}$ & $\textbf{95.14}_{\pm0.15}$ \\
    Full fine-tuning & $\textbf{94.42}_{\pm0.11}$ & $\textbf{94.70}_{\pm0.10}$ & $\textbf{94.85}_{\pm0.11}$ & $\underline{94.90}_{\pm0.06}$ \\
    \bottomrule
    \end{tabular}
}
\end{table}

\begin{table}[htbp]
\centering
\caption{Comparison with pretraining-oriented subspace methods. Fine-tuned Llama-3-8B-Instruct (Alpaca-en-demo, $r=8$). Results are MMLU accuracy (\%). Baseline results from \citet{zhu2024apollo}. \textbf{Bold} and \underline{underline} indicate the highest and second-highest accuracies.
}
\label{tab:subspace_comparison}
{
    \begin{tabular}{c|ccccc}
    \toprule
    Method & STEM & Social Sciences & Humanities & Others & Average \\
    \midrule
    GaLore & 54.50 & 75.11 & 58.59 & 72.03 & 64.43 \\
    Fira & 53.53 & 75.46 & 58.59 & 72.09 & 64.32 \\
    APOLLO /w SVD & \underline{54.73} & 75.46 & \underline{58.72} & \underline{72.68} & \underline{64.76} \\
    APOLLO & 54.37 & \underline{75.86} & 58.18 & 71.69 & 64.35 \\
    \midrule
    \texttt{PESO-LoRA-R} & \textbf{57.15} & \textbf{76.86} & \textbf{60.79} & \textbf{73.57} & \textbf{66.47} \\
    \bottomrule
    \end{tabular}
}
\end{table}

\subsection{Multi-Epoch Low-Rank Analysis}
\label{sec:long_epoch}
To highlight the effect of subspace exploration, we extend T5-base fine-tuning on SST-2 from one epoch (Section~\ref{subsec:nlu_tasks}) to four. 
The longer schedule allows more thorough exploration and mitigates the intrinsic low-rank bottleneck. 
As shown in Table~\ref{tab:epoch_results}, \texttt{PESO-LoRA-R} with $r=2$ consistently outperforms standard LoRA even with larger ranks ($r=4,8$), indicating that it alleviates the low-rank limitation and achieves stronger performance under tightly restricted memory budgets.


\section{Convergence Analysis}\label{sec:analysis}

In this section we establish convergence guarantees for \texttt{PESO-LoRA-R}, a practical instantiation of PESO.
A more general asymptotic convergence result for PESO is provided in Appendix~\ref{sec:asym}.
We consider a nonconvex objective $\ell(W)$ and make the following standard assumptions.

\begin{assumption}
\label{assump:loss_regular}
The loss $\ell$ is bounded below and has $L$-Lipschitz continuous gradients.
\end{assumption}

\begin{assumption}
\label{assump:moments}
Stochastic gradients $\tilde G_k$ of the full gradient $G_k=\nabla\ell(W_k)$ are accessible and satisfy $\mathbb{E}(\tilde G_k)=G_k$.
There exists $C>0$ such that $\mathbb{V}(\tilde G_k)\leq C$.
\end{assumption}

We analyze \texttt{PESO-LoRA-R} (Algorithm~\ref{alg:lora_restart}), where the inner optimizer is set as Adam.
Below we state a standard regularity assumption on the low-rank objective $f(A,B)=\ell(W+AB)$ optimized by Adam.

\begin{assumption}
\label{assump:inner}
At each iteration, \texttt{PESO-LoRA-R} applies Adam to the low-rank objective
$f(A,B)=\ell(W+AB)$.
We assume for $f$:
(i) $\nabla f$ is Lipschitz continuous,
(ii) stochastic gradients are unbiased, and
(iii) stochastic gradient variance is bounded.
\end{assumption}

Let $P_r(G_k)$ denote the rank-$r$ truncated SVD of $G_k$.
Since $P_r(\cdot)$ is nonlinear, in general
$\mathbb{E}[P_r(\tilde G_k)] \neq P_r(\mathbb{E}[\tilde G_k])$.
Our analysis only requires that this projection-induced bias is asymptotically negligible in a stepsize-weighted sense.

\begin{assumption}
\label{assump:subspace}
At each restart step $(k-1)\bmod K=0$, the estimated subspace satisfies $\|\mathbb{E}[ P_r(\tilde G_k)] - P_r(G_k)\|_F \le \varepsilon_k,
\sum_k \eta_k \varepsilon_k^2 < \infty.$
\end{assumption}
\begin{rem}
This assumption holds in several standard regimes in subspace optimization literature. 
It holds trivially in the {full-batch} setting ($\varepsilon_k=0$) and is satisfied in the {large-batch} regime via concentration bounds $\varepsilon_k=\mathcal{O}(|\mathcal{B}|^{-1/2})$; see, e.g., \citep{he2024subspace}. 
Furthermore, using {independent randomized subspaces} \citep{chen2025memory} yields $\varepsilon_k=0$ due to conditional unbiasedness.
\end{rem}

\begin{thm}
\label{thm:peso_rate}
Under Assumptions~\ref{assump:loss_regular}--\ref{assump:subspace} and following the hyperparameter choices  from Appendix~\ref{sec:proof_details},
the iterates generated by \texttt{PESO-LoRA-R} satisfy
\[
\textstyle
\min_{0 \le k \le T-1}
\mathbb{E}\|\nabla \ell(W_k)\|_2^2
= \mathcal{O}\!\left(1/{\sqrt{T}}\right).
\]
\end{thm}


\section{Conclusions and Limitations}
This paper bridges classical methodology from nonlinear optimization with the practical challenge of memory-efficient LLM fine-tuning. 
It highlights two key perspectives: 1) practical constraints in LLM training, such as memory limits, can motivate specialized optimization designs; 2) principles from nonlinear optimization can in turn guide the development of practical algorithms for LLMs. 
We believe this opens promising directions for principled and scalable LLM training, while underscoring a broader philosophy: the rapid progress in LLMs can be enriched by classical foundations in computation and optimization.
Our study has certain limitations. 
Due to limited resources, our experiments are restricted to medium-scale settings and do not yet reach the largest practical regimes. 
Extending our framework to full-scale pre-training remains an important future work, and we expect the methodology developed here to provide a solid foundation for such efforts.

\section*{Acknowledgments}
We would like to thank Professor Jorge Nocedal for his invaluable insights and discussions, which greatly assisted in formulating the framework and refining the theoretical development of this work.



\bibliographystyle{icml2026}
\bibliography{references}

@article{kingma2014adam,
  title={Adam: A method for stochastic optimization},
  author={Kingma, Diederik P and Ba, Jimmy},
  journal={arXiv preprint arXiv:1412.6980},
  year={2014}
}

@misc{jordan2024muon,
  author       = {Keller Jordan and Yuchen Jin and Vlado Boza and Jiacheng You and
                  Franz Cesista and Laker Newhouse and Jeremy Bernstein},
  title        = {Muon: An optimizer for hidden layers in neural networks},
  year         = {2024},
  url          = {https://kellerjordan.github.io/posts/muon/}
}

@article{zhao2024galore,
  title={Galore: Memory-efficient llm training by gradient low-rank projection},
  author={Zhao, Jiawei and Zhang, Zhenyu and Chen, Beidi and Wang, Zhangyang and Anandkumar, Anima and Tian, Yuandong},
  journal={arXiv preprint arXiv:2403.03507},
  year={2024}
}

@techreport{conn1994iterated,
  title={On iterated-subspace minimization methods for nonlinear optimization},
  author={Conn, AR and Toint, Ph L and Sartenaer, A and Gould, NIM},
  year={1994},
  institution={P00024646}
}

@article{zhang2025scalable,
  title={Scalable Derivative-Free Optimization Algorithms with Low-Dimensional Subspace Techniques},
  author={Zhang, Zaikun},
  journal={arXiv preprint arXiv:2501.04536},
  year={2025}
}

@article{liang2024memory,
  title={Memory-efficient llm training with online subspace descent},
  author={Liang, Kaizhao and Liu, Bo and Chen, Lizhang and Liu, Qiang},
  journal={arXiv preprint arXiv:2408.12857},
  year={2024}
}

@article{chen2025memory,
  title={A Memory Efficient Randomized Subspace Optimization Method for Training Large Language Models},
  author={Chen, Yiming and Zhang, Yuan and Liu, Yin and Yuan, Kun and Wen, Zaiwen},
  journal={arXiv preprint arXiv:2502.07222},
  year={2025}
}

@inproceedings{jiang2024unified,
  title={A Unified Convergence Theory for Large Language Model Efficient Fine-tuning},
  author={Jiang, Zhanhong and Saadati, Nastaran and Balu, Aditya and Pham, Minh and Waite, Joshua Russell and Saleem, Nasla and Hegde, Chinmay and Sarkar, Soumik},
  booktitle={OPT 2024: Optimization for Machine Learning},
  year={2024}
}

@article{kozak2019stochastic,
  title={Stochastic subspace descent},
  author={Kozak, David and Becker, Stephen and Doostan, Alireza and Tenorio, Luis},
  journal={arXiv preprint arXiv:1904.01145},
  year={2019}
}

@article{bottou2018optimization,
  title={Optimization methods for large-scale machine learning},
  author={Bottou, L{\'e}on and Curtis, Frank E and Nocedal, Jorge},
  journal={SIAM review},
  volume={60},
  number={2},
  pages={223--311},
  year={2018},
  publisher={SIAM}
}

@article{zhang2023adalora,
  title={Adalora: Adaptive budget allocation for parameter-efficient fine-tuning},
  author={Zhang, Qingru and Chen, Minshuo and Bukharin, Alexander and Karampatziakis, Nikos and He, Pengcheng and Cheng, Yu and Chen, Weizhu and Zhao, Tuo},
  journal={arXiv preprint arXiv:2303.10512},
  year={2023}
}

@article{hu2022lora,
  title={Lora: Low-rank adaptation of large language models.},
  author={Hu, Edward J and Shen, Yelong and Wallis, Phillip and Allen-Zhu, Zeyuan and Li, Yuanzhi and Wang, Shean and Wang, Lu and Chen, Weizhu and others},
  journal={ICLR},
  volume={1},
  number={2},
  pages={3},
  year={2022}
}

@article{aghajanyan2020intrinsic,
  title={Intrinsic dimensionality explains the effectiveness of language model fine-tuning},
  author={Aghajanyan, Armen and Zettlemoyer, Luke and Gupta, Sonal},
  journal={arXiv preprint arXiv:2012.13255},
  year={2020}
}

@article{li2018measuring,
  title={Measuring the intrinsic dimension of objective landscapes},
  author={Li, Chunyuan and Farkhoor, Heerad and Liu, Rosanne and Yosinski, Jason},
  journal={arXiv preprint arXiv:1804.08838},
  year={2018}
}

@book{nocedal1999numerical,
edition = {2nd ed.},
isbn = {0387303030},
language = {eng},
publisher = {Springer},
series = {Springer Series in Operations Research},
abstract = {'Numerical Optimization' presents a comprehensive description of the effective methods in continuous optimization. The book includes chapters on nonlinear interior methods & derivative-free methods for optimization. It is useful for graduate students, researchers and practitioners.},
author = {Nocedal, Jorge. and Wright, Stephen J.},
address = {New York},
booktitle = {Numerical optimization},
title = {Numerical optimization },
year = {2006},
}

@article{byrd1995limited,
  title={A limited memory algorithm for bound constrained optimization},
  author={Byrd, Richard H and Lu, Peihuang and Nocedal, Jorge and Zhu, Ciyou},
  journal={SIAM Journal on scientific computing},
  volume={16},
  number={5},
  pages={1190--1208},
  year={1995},
  publisher={SIAM}
}

@article{zhang2025one,
  title={One-step full gradient suffices for low-rank fine-tuning, provably and efficiently},
  author={Zhang, Yuanhe and Liu, Fanghui and Chen, Yudong},
  journal={arXiv preprint arXiv:2502.01235},
  year={2025}
}

@article{wang2024lora,
  title={LoRA-Pro: Are Low-Rank Adapters Properly Optimized?},
  author={Wang, Zhengbo and Liang, Jian and He, Ran and Wang, Zilei and Tan, Tieniu},
  journal={arXiv preprint arXiv:2407.18242},
  year={2024}
}

@article{zhang2024riemannian,
  title={Riemannian preconditioned lora for fine-tuning foundation models},
  author={Zhang, Fangzhao and Pilanci, Mert},
  journal={arXiv preprint arXiv:2402.02347},
  year={2024}
}

@article{hayou2024lora+,
  title={Lora+: Efficient low rank adaptation of large models},
  author={Hayou, Soufiane and Ghosh, Nikhil and Yu, Bin},
  journal={arXiv preprint arXiv:2402.12354},
  year={2024}
}

@article{meng2024pissa,
  title={Pissa: Principal singular values and singular vectors adaptation of large language models},
  author={Meng, Fanxu and Wang, Zhaohui and Zhang, Muhan},
  journal={Advances in Neural Information Processing Systems},
  volume={37},
  pages={121038--121072},
  year={2024}
}

@article{wang2024loraga,
  title={Lora-ga: Low-rank adaptation with gradient approximation},
  author={Wang, Shaowen and Yu, Linxi and Li, Jian},
  journal={Advances in Neural Information Processing Systems},
  volume={37},
  pages={54905--54931},
  year={2024}
}

@article{tastan2025loft,
  title={LoFT: Low-Rank Adaptation That Behaves Like Full Fine-Tuning},
  author={Tastan, Nurbek and Laskaridis, Stefanos and Takac, Martin and Nandakumar, Karthik and Horvath, Samuel},
  journal={arXiv preprint arXiv:2505.21289},
  year={2025}
}

@article{jang2024lora,
  title={LoRA training in the NTK regime has no spurious local minima},
  author={Jang, Uijeong and Lee, Jason D and Ryu, Ernest K},
  journal={arXiv preprint arXiv:2402.11867},
  year={2024}
}

@article{yang2023spectral,
  title={A spectral condition for feature learning},
  author={Yang, Greg and Simon, James B and Bernstein, Jeremy},
  journal={arXiv preprint arXiv:2310.17813},
  year={2023}
}

@article{cosson2023low,
  title={Low-rank gradient descent},
  author={Cosson, Romain and Jadbabaie, Ali and Makur, Anuran and Reisizadeh, Amirhossein and Shah, Devavrat},
  journal={IEEE Open Journal of Control Systems},
  volume={2},
  pages={380--395},
  year={2023},
  publisher={IEEE}
}

@article{loshchilov2017decoupled,
  title={Decoupled weight decay regularization},
  author={Loshchilov, Ilya and Hutter, Frank},
  journal={arXiv preprint arXiv:1711.05101},
  year={2017}
}

@article{wang2018glue,
  title={GLUE: A multi-task benchmark and analysis platform for natural language understanding},
  author={Wang, Alex and Singh, Amanpreet and Michael, Julian and Hill, Felix and Levy, Omer and Bowman, Samuel R},
  journal={arXiv preprint arXiv:1804.07461},
  year={2018}
}

@article{raffel2020exploring,
  title={Exploring the limits of transfer learning with a unified text-to-text transformer},
  author={Raffel, Colin and Shazeer, Noam and Roberts, Adam and Lee, Katherine and Narang, Sharan and Matena, Michael and Zhou, Yanqi and Li, Wei and Liu, Peter J},
  journal={Journal of machine learning research},
  volume={21},
  number={140},
  pages={1--67},
  year={2020}
}

@article{touvron2023llama,
  title={Llama 2: Open foundation and fine-tuned chat models},
  author={Touvron, Hugo and Martin, Louis and Stone, Kevin and Albert, Peter and Almahairi, Amjad and Babaei, Yasmine and Bashlykov, Nikolay and Batra, Soumya and Bhargava, Prajjwal and Bhosale, Shruti and others},
  journal={arXiv preprint arXiv:2307.09288},
  year={2023}
}

@article{yu2023metamath,
  title={Metamath: Bootstrap your own mathematical questions for large language models},
  author={Yu, Longhui and Jiang, Weisen and Shi, Han and Yu, Jincheng and Liu, Zhengying and Zhang, Yu and Kwok, James T and Li, Zhenguo and Weller, Adrian and Liu, Weiyang},
  journal={arXiv preprint arXiv:2309.12284},
  year={2023}
}

@article{cobbe2021training,
  title={Training verifiers to solve math word problems},
  author={Cobbe, Karl and Kosaraju, Vineet and Bavarian, Mohammad and Chen, Mark and Jun, Heewoo and Kaiser, Lukasz and Plappert, Matthias and Tworek, Jerry and Hilton, Jacob and Nakano, Reiichiro and others},
  journal={arXiv preprint arXiv:2110.14168},
  year={2021}
}

@article{zheng2024opencodeinterpreter,
  title={Opencodeinterpreter: Integrating code generation with execution and refinement},
  author={Zheng, Tianyu and Zhang, Ge and Shen, Tianhao and Liu, Xueling and Lin, Bill Yuchen and Fu, Jie and Chen, Wenhu and Yue, Xiang},
  journal={arXiv preprint arXiv:2402.14658},
  year={2024}
}

@article{chen2021evaluating,
  title={Evaluating large language models trained on code},
  author={Chen, Mark and Tworek, Jerry and Jun, Heewoo and Yuan, Qiming and Pinto, Henrique Ponde De Oliveira and Kaplan, Jared and Edwards, Harri and Burda, Yuri and Joseph, Nicholas and Brockman, Greg and others},
  journal={arXiv preprint arXiv:2107.03374},
  year={2021}
}

@article{hendrycks2020measuring,
  title={Measuring massive multitask language understanding},
  author={Hendrycks, Dan and Burns, Collin and Basart, Steven and Zou, Andy and Mazeika, Mantas and Song, Dawn and Steinhardt, Jacob},
  journal={arXiv preprint arXiv:2009.03300},
  year={2020}
}

@misc{alpaca,
  author = {Rohan Taori and Ishaan Gulrajani and Tianyi Zhang and Yann Dubois and Xuechen Li and Carlos Guestrin and Percy Liang and Tatsunori B. Hashimoto },
  title = {Stanford Alpaca: An Instruction-following LLaMA model},
  year = {2023},
  publisher = {GitHub},
  journal = {GitHub repository},
  howpublished = {\url{https://github.com/tatsu-lab/stanford_alpaca}},
}

@article{cragg1969study,
  title={Study on a supermemory gradient method for the minimization of functions},
  author={Cragg, EE and Levy, AV},
  journal={Journal of Optimization Theory and Applications},
  volume={4},
  number={3},
  pages={191--205},
  year={1969},
  publisher={Springer}
}

@article{liu1989limited,
  title={On the limited memory BFGS method for large scale optimization},
  author={Liu, Dong C and Nocedal, Jorge},
  journal={Mathematical programming},
  volume={45},
  number={1},
  pages={503--528},
  year={1989},
  publisher={Springer}
}

@inproceedings{yuan2014review,
  title={A review on subspace methods for nonlinear optimization},
  author={Yuan, Ya-xiang},
  booktitle={Proceedings of the International Congress of Mathematics},
  pages={807--827},
  year={2014}
}

@article{menickelly2024augmenting,
  title={Augmenting Subspace Optimization Methods with Linear Bandits},
  author={Menickelly, Matt},
  journal={arXiv preprint arXiv:2412.14278},
  year={2024}
}

@article{cartis2023scalable,
  title={Scalable subspace methods for derivative-free nonlinear least-squares optimization},
  author={Cartis, Coralia and Roberts, Lindon},
  journal={Mathematical Programming},
  volume={199},
  number={1},
  pages={461--524},
  year={2023},
  publisher={Springer}
}

@article{nozawa2025zeroth,
  title={Zeroth-order random subspace algorithm for non-smooth convex optimization},
  author={Nozawa, Ryota and Poirion, Pierre-Louis and Takeda, Akiko},
  journal={Journal of Optimization Theory and Applications},
  volume={204},
  number={3},
  pages={53},
  year={2025},
  publisher={Springer}
}

@article{dzahini2024stochastic,
  title={Stochastic trust-region algorithm in random subspaces with convergence and expected complexity analyses},
  author={Dzahini, Kwassi Joseph and Wild, Stefan M},
  journal={SIAM Journal on Optimization},
  volume={34},
  number={3},
  pages={2671--2699},
  year={2024},
  publisher={SIAM}
}

@article{han2024parameter,
  title={Parameter-efficient fine-tuning for large models: A comprehensive survey},
  author={Han, Zeyu and Gao, Chao and Liu, Jinyang and Zhang, Jeff and Zhang, Sai Qian},
  journal={arXiv preprint arXiv:2403.14608},
  year={2024}
}

@article{brown2020language,
  title={Language models are few-shot learners},
  author={Brown, Tom and Mann, Benjamin and Ryder, Nick and Subbiah, Melanie and Kaplan, Jared D and Dhariwal, Prafulla and Neelakantan, Arvind and Shyam, Pranav and Sastry, Girish and Askell, Amanda and others},
  journal={Advances in neural information processing systems},
  volume={33},
  pages={1877--1901},
  year={2020}
}

@inproceedings{houlsby2019parameter,
  title={Parameter-efficient transfer learning for NLP},
  author={Houlsby, Neil and Giurgiu, Andrei and Jastrzebski, Stanislaw and Morrone, Bruna and De Laroussilhe, Quentin and Gesmundo, Andrea and Attariyan, Mona and Gelly, Sylvain},
  booktitle={International conference on machine learning},
  pages={2790--2799},
  year={2019},
  organization={PMLR}
}

@book{bathe2006finite,
  title={Finite element procedures},
  author={Bathe, Klaus-J{\"u}rgen},
  year={2006},
  publisher={Klaus-Jurgen Bathe}
}

@book{nesterov2018lectures,
  title={Lectures on convex optimization},
  author={Nesterov, Yurii},
  volume={137},
  year={2018},
  publisher={Springer}
}

@article{zhu2024apollo,
  title={Apollo: Sgd-like memory, adamw-level performance},
  author={Zhu, Hanqing and Zhang, Zhenyu and Cong, Wenyan and Liu, Xi and Park, Sem and Chandra, Vikas and Long, Bo and Pan, David Z and Wang, Zhangyang and Lee, Jinwon},
  journal={arXiv preprint arXiv:2412.05270},
  year={2024}
}

@article{glentis2025scalable,
  title={Scalable parameter and memory efficient pretraining for llm: Recent algorithmic advances and benchmarking},
  author={Glentis, Athanasios and Li, Jiaxiang and Shang, Qiulin and Han, Andi and Tsaknakis, Ioannis and Wei, Quan and Hong, Mingyi},
  journal={arXiv preprint arXiv:2505.22922},
  year={2025}
}

@article{dubey2024llama,
  title={The llama 3 herd of models},
  author={Dubey, Abhimanyu and Jauhri, Abhinav and Pandey, Abhinav and Kadian, Abhishek and Al-Dahle, Ahmad and Letman, Aiesha and Mathur, Akhil and Schelten, Alan and Yang, Amy and Fan, Angela and others},
  journal={arXiv e-prints},
  pages={arXiv--2407},
  year={2024}
}

@article{chen2024fira,
  title={Fira: Can We Achieve Full-rank Training of LLMs Under Low-rank Constraint?},
  author={Chen, Xi and Feng, Kaituo and Li, Changsheng and Lai, Xunhao and Yue, Xiangyu and Yuan, Ye and Wang, Guoren},
  journal={arXiv preprint arXiv:2410.01623},
  year={2024}
}

@article{robert2024ldadam,
  title={Ldadam: Adaptive optimization from low-dimensional gradient statistics},
  author={Robert, Thomas and Safaryan, Mher and Modoranu, Ionut-Vlad and Alistarh, Dan},
  journal={arXiv preprint arXiv:2410.16103},
  year={2024}
}

@article{rajabi2025subtrack++,
  title={SubTrack++: Gradient Subspace Tracking for Scalable LLM Training},
  author={Rajabi, Sahar and Nonta, Nayeema and Rambhatla, Sirisha},
  journal={arXiv preprint arXiv:2502.01586},
  year={2025}
}

@inproceedings{zhang2025breaking,
  title={Breaking the Frozen Subspace: Importance Sampling for Low-Rank Optimization in LLM Pretraining},
  author={Zhang, Haochen and Yin, Junze and Wang, Guanchu and Liu, Zirui and Yang, Lin and Zhang, Tianyi and Shrivastava, Anshumali and Braverman, Vladimir},
  booktitle={The Thirty-ninth Annual Conference on Neural Information Processing Systems},
  year={2025}
}

@article{malinovsky2024randomized,
  title={Randomized asymmetric chain of lora: The first meaningful theoretical framework for low-rank adaptation},
  author={Malinovsky, Grigory and Michieli, Umberto and Hammoud, Hasan Abed Al Kader and Ceritli, Taha and Elesedy, Hayder and Ozay, Mete and Richt{\'a}rik, Peter},
  journal={arXiv preprint arXiv:2410.08305},
  year={2024}
}

@article{he2024subspace,
  title={Subspace optimization for large language models with convergence guarantees},
  author={He, Yutong and Li, Pengrui and Hu, Yipeng and Chen, Chuyan and Yuan, Kun},
  journal={arXiv preprint arXiv:2410.11289},
  year={2024}
}

\appendix
\onecolumn
\section{Synthetic Example of LoRA Deficiency}\label{sec:synthetic}
One critical limitation in the literature is the absence of convergence guarantees toward valid optimality conditions of \eqref{opt_original}.
Most existing works establish convergence only with respect to the low-dimensional parameters—such as the factors $A$ and $B$ in LoRA—but do not address convergence with respect to the full-parameter matrix $W$.
For instance, \citet{jiang2024unified} shows that $\nabla_A \ell(W_0 + A_k B_k)$ and $\nabla_B \ell(W_0 + A_k B_k)$ vanish as $k \to \infty$, while leaving the behavior of $\nabla_W \ell$ uncharacterized.
This gap is not merely technical: it highlights a fundamental deficiency of PEFT methods compared to standard full-parameter training. 
In fact, the optimal loss attained by LoRA can be arbitrarily worse than the true optimal loss of \eqref{opt_original}.
To illustrate this, consider the following simple synthetic example in matrix optimization:
\begin{equation}\label{opt:example} 
\min_{W\in\mathbb{R}^{n\times n}}\quad \|W-M\|_F^2,\quad\text{ where }M = a \cdot \operatorname{diag}(1,\dots,1,0,\dots,0), \quad (r{+}1 \text{ ones}).
\end{equation}
The optimal solution is clearly $W^* = M$ with $f(W^*) = 0$.
However, applying LoRA with rank $r$ to \eqref{opt:example} can at most achieve a rank-$r$ approximation of $M$, and attains $f(A^*B^*) = a^2$.
As $a$ increases, or as the rank mismatch between the LoRA adapters and the true solution grows, the optimality gap between LoRA and full gradient methods can become arbitrarily large.

This example underscores the cost of memory restrictions: while low-rank parameterizations save memory, they may fundamentally limit convergence to the true optimum.
In the middle and right panels of Figure~\ref{fig:examples}, we show that LoRA with exploration ({\tt PESO-LoRA-R}) can effectively converge to the true optimal solution while LoRA would not.
Note in this implementation of {\tt PESO-LoRA-R} (Algorithm~\ref{alg:lora_restart}), the SVD of the full gradient is a rank-$r$ SVD and therefore the low-rankness of this computational scheme would not affect the convergence.

\section{Review on Subspace Minimization}\label{sec:subspace_review}
It is worth noting that Algorithm~\ref{alg:generic_subspace} is essentially equivalent to the classical two-loop subspace minimization scheme \citep{conn1994iterated}, summarized in Algorithm~\ref{alg:subspace_algo}. 

The key distinction between warm-start and restart, discussed in~\eqref{eq:warm_start} and~\eqref{eq:restart} of Section~\ref{sec:explore_exploit}, lies in how the subproblem~\eqref{eq:subspace_opt} is initialized within each outer iteration of Algorithm~\ref{alg:subspace_algo}.

\begin{algorithm}[tb]
\caption{Classical Iterated-Subspace Minimization}
\label{alg:subspace_algo}
\begin{algorithmic}[1]
    \Require Initialization $W_0\in\mathbb{R}^{m\times n}$, $\xi_0\in\mathbb{R}^d$, $\mathcal{M}_0$, an algorithmic subroutine \texttt{UpdateM}, an optimizer \texttt{Opt}, frequency $K$.
    \State Set $k \gets 1$ and $\tilde W_0\gets W_0$.
    \While{stopping criteria not satisfied}
    \State Update the subspace $(\mathcal{M}_{k},\tilde W_{k})\gets \texttt{UpdateM}(\mathcal{M}_{k-1},\tilde W_{k-1})$.
    \State Approximately solve the subspace minimization by {\tt Opt} using $K$ inner-loop iterations:
    \begin{equation}\label{eq:subspace_opt}
        \xi_k^*\gets \text{approx}\arg\min_{\xi}\,\,\ell(\tilde W_{k}+\mathcal{M}_{k}(\xi))
    \end{equation}
    \State $W_{k}\gets \tilde W_{k}+\mathcal{M}_{k}(\xi_k^*)$.
    \State $k \gets k+1$.
    \EndWhile
\end{algorithmic}
\end{algorithm}

\section{Projected Subspace and Memory Efficiency}\label{sec:memory_eff_variant}

One-sided projected subspaces in PESO can offer stronger memory efficiency than LoRA.  
This idea is exemplified by GaLore \citep{zhao2024galore}, which we now place in the PESO framework.  
GaLore requires memory of order $mn + mr + 2nr$ (assuming $m \le n$), compared to LoRA’s $mn + 3mr + 3nr$.  

With the projected subspace representation in Table~\ref{tab:methods}, optimization reduces to $\xi := R \in \mathbb{R}^{r \times n}$.  
By the chain rule,
\begin{equation}
    \nabla_{R}\ell(W_k) 
    = \nabla_{\xi}\ell(\tilde{W}_k + P_k R_k) 
    = P_k^\top \nabla_W \ell(W_k) 
    = P_k^\top G_k.
\end{equation}
Once $P_k$ is computed and stored, subspace gradients are obtained directly from $G_k$ with no extra overhead, though computing the full $G_k$ each iteration is more costly than subspace-only gradients.  

Then, an {\it exploitation} step in the subspace by gradient descent with learning rate $\eta_k$ gives
\begin{equation}\label{eq:memory_efficient_subspace_min}
    \begin{aligned}
    W_{k+1} 
    &= \tilde{W}_k + \mathcal{M}_k(R_k - \eta_k P_k^\top G_k) \\
    &= \tilde{W}_k + P_k(R_k - \eta_k P_k^\top G_k) \\
    &= W_k - \eta_k P_k P_k^\top G_k,
    \end{aligned}
\end{equation}
which matches the classical projected subspace descent step \citep{kozak2019stochastic}.  

In GaLore, $P_k$ is chosen as the rank-$r$ left SVD of the full gradient at fixed intervals.  
PESO recovers GaLore when the subspace gradient $\nabla_R \ell(W_k) = P_k^\top G_k$ in~\eqref{eq:memory_efficient_subspace_min} is replaced with its Adam update.  

This shows how GaLore saves memory: gradients of $\xi_k$ are derived directly from $G_k$, and updates are written back into $W$ via~\eqref{eq:memory_efficient_subspace_min}, reusing the stored pretrained weights $W_0$.  
Thus explicit storage of $\xi_k$ is unnecessary.  
Further savings arise because GaLore omits optimizer states for $\mathcal{M}_k$ (i.e., for $P_k$), instead updating $P_k$ by direct reassignment in a restart manner.
However, smoother transitions of subspace parameters often yield greater stability, as observed in our experiments; we discuss smoothing techniques for restart strategies in the next section.

\section{Implementation of PESO-LoRA-R}\label{sec:implementation}
This section details implementation strategies for \texttt{PESO-LoRA-R} that are critical for practical stability, specifically regarding subspace smoothing and the alignment of optimizer states. 
While these techniques collectively enhance robustness, our empirical observations indicate that \textit{velocity alignment} is the most significant factor in stabilizing training dynamics during restarts.
Consequently, for a streamlined implementation that faithfully recovers our results, we strongly recommend prioritizing the velocity alignment mechanism described in Section~\ref{subsec:alignment}.

\subsection{Smoothing the Subspaces}\label{subsec:smoothing_subspace}
A potential issue with restart methods (see~\eqref{eq:restart}) is that they directly reassign the subspace parameterization from new information, which can introduce sharp changes and instability, especially in LLM training where stochastic noise is significant.

To mitigate this, we adopt an Exponential Moving Average (EMA) of old and new subspaces, similar in spirit to how Adam \citep{kingma2014adam} stabilizes noisy updates.  
However, this is nontrivial in {\tt PESO-LoRA-R} (Algorithm~\ref{alg:lora_restart}), since the pre-restart adapters $(A_k,B_k)$—evolved through Adam dynamics—may differ significantly in scale and coordinates from the restarted pair $(U_k\sqrt{\Lambda_k},\sqrt{\Lambda_k}V_k)$ obtained from rank-$r$ SVD.  
A naive EMA would mismatch these terms and discard valuable exploration information.

We resolve this by performing {\it basis and scaling alignment}.  
For clarity, we omit the subscript $k$.  
Given current adapters $(A,B)$, we first compute thin QR factorizations:
\[
A=Q_A R_A,\quad B=R_B Q_B^\top, \quad Q_A^\top Q_A=I_r,\; Q_B^\top Q_B=I_r,
\]
to extract bases $(Q_A,Q_B)$ and decouple scaling.  
Next, let the rank-$r$ SVD of the full gradient be $-G \approx U\,\Sigma\,V^\top$.  
We align $(U,V)$ to $(Q_A,Q_B)$ by applying SVD to $Q_A^\top U$ and $Q_B^\top V$:
\[
Q_A^\top U = P_U \Sigma_U Q_U^\top,\;\; R_U:=P_U Q_U^\top, \qquad
Q_B^\top V = P_V \Sigma_V Q_V^\top,\;\; R_V:=P_V Q_V^\top,
\]
yielding aligned bases
\[
\widehat U := U R_U^\top,\qquad \widehat V := V R_V^\top.
\]
Here $R_U$ solves
\[
\min_R \|UR-Q_A\|_F,\quad RR^\top=I,
\]
so $\widehat U$ best aligns $U$ with $Q_A$ in Frobenius norm; the same holds for $\widehat V$.  
This produces the best alignment by classical low-rank SVD guarantees.

We then smooth the bases via EMA:
\[
U_{\mathrm{ema}} := \tau_1 Q_A + (1-\tau_1)\widehat U,\qquad
V_{\mathrm{ema}} := \tau_1 Q_B + (1-\tau_1)\widehat V,
\]
for smoothing parameter $\tau_1$.  
To smooth the scaling, we project the old adapter into the new bases and combine with the gradient:
\[
S_{\mathrm{new}} := \tau_2\,[U_{\mathrm{ema}}^\top (AB)\,V_{\mathrm{ema}}] - (1-\tau_2)\,[U_{\mathrm{ema}}^\top G\,V_{\mathrm{ema}}],
\]
with parameter $\tau_2$.  
Since this merges the scaling of $A$ and $B$, we refactorize $S_{\mathrm{new}}$ using polar decomposition \citep{glentis2025scalable}:
\[
S_{\mathrm{new}} = R_L\,\Sigma\,R_R^\top,
\]
and define the new adapters via balanced splitting:
\[
A_{\mathrm{new}} := U_{\mathrm{ema}}\,R_L\,\Sigma^{1/2},\qquad
B_{\mathrm{new}} := \Sigma^{1/2}\,R_R^\top\,V_{\mathrm{ema}}^\top.
\]
Finally, although empirically less effective, we also rotate the momentum vectors according to the new basis.  
Specifically, we compute
\[
T_A := Q_A^\top U_{\mathrm{ema}},\qquad T_B := Q_B^\top V_{\mathrm{ema}},
\]
and update the pre-restart momentum $(m_A,m_B)$ as
\[
m_A \leftarrow m_A T_A,\qquad m_B \leftarrow T_B^\top m_B.
\]
Training then proceeds with the new adapters $(A_{\mathrm{new}},B_{\mathrm{new}})$ in {\tt PESO-LoRA-R}.

\subsection{Momentum and Velocity Alignment}\label{subsec:alignment}
Even with basis and scaling alignment from the previous subsection, another stability issue arises: the new adapters $(A_{\mathrm{new}},B_{\mathrm{new}})$ can induce gradients of very different magnitudes compared to the old $(A,B)$.  
Since restarts are based on the SVD of the full gradient, the new adapters align with top gradient directions, so the gradients with respect to $(A_{\mathrm{new}},B_{\mathrm{new}})$ are typically larger.  
This mismatch can leave the velocity ``too cold'': historical states $(v_A,v_B)$ may underestimate the new gradient magnitudes, leading to an excessively large normalized step and unstable behavior, often observed as jumps in the loss curve.

To address this, we propose a combined \emph{momentum/velocity scaling} technique with a {\it $\beta_2$ warm-up}.  
Let $(m_A,m_B)$ and $(v_A,v_B)$ denote the momentum and velocity before restart, and $(g_A,g_B)$ the gradients after restart computed with respect to $(A_{\mathrm{new}},B_{\mathrm{new}})$.  
We define scaling factors
\[
s^{(v)}_A \;=\; \frac{\|g_A\|^2}{{\|v_A\|}},\quad
s^{(m)}_A \;=\; \frac{\|g_A\|}{\|m_A\|},\quad
s^{(v)}_B \;=\; \frac{\|g_B\|^2}{{\|v_B\|}},\quad
s^{(m)}_B \;=\; \frac{\|g_B\|}{\|m_B\|},
\]
which correct scale mismatches between the old optimization states and the new gradients.
Here, $\|\cdot\|$ denotes the RMS norm.
Momentum and velocity are then rescaled as
\[
v_A \;\gets\; s_A^{(v)}\,v_A,\quad
m_A \;\gets\; s_A^{(m)}\,m_A,\quad
v_B \;\gets\; s_B^{(v)}\,v_B,\quad
m_B \;\gets\; s_B^{(m)}\,m_B.
\]

This resolves scale mismatches, but an additional adjustment is needed: $\beta_2=0.999$ (velocity EMA) adapts much more slowly than $\beta_1=0.9$ (momentum EMA).  
At initialization, bias correction balances these, but after a restart we require extra correction.  
We therefore decrease $\beta_2$ immediately after a restart and gradually warm it back to $0.999$ over a window $T$.  
If a restart occurs at iteration $t_r$, then for $t_r \le t \le t_r+T$ we set
\[
\beta_2(t) \;=\; \beta_{2,\min} + \big(\beta_{2,\text{final}} - \beta_{2,\min}\big)\,\tfrac{1}{2}\!\left(1 - \cos \tfrac{\pi (t-t_r)}{T}\right), 
\quad \beta_{2,\text{final}} = 0.999,
\]
and for $t > t_r+T$ we use $\beta_2 = 0.999$ as usual.
In our experiments, we set $\beta_{2,\min} = 0.95$ and $T=\lfloor K/3\rfloor$.

{
\section{Detailed Version of Algorithms}\label{app:algo_details}
In this section, we provide detailed versions of {\tt PESO-LoRA-R} (Algorithm~\ref{alg:lora_restart}) and {\tt PESO-LoRA-T} (Algorithm~\ref{alg:SVD_exploit}) which are implemented in the experiments of Section~\ref{sec:experiments}.
}

\begin{algorithm}[ht]
\caption{{{\tt PESO-LoRA-R}: {\bf PESO} with {\bf LoRA} and Subspace Explo{\bf R}ation} (Practical Version)}
\label{alg:peso-lora-r-practical}
\begin{algorithmic}[1]

    \Require Pre-trained parameters $W_0\in\mathbb{R}^{m\times n}$, frequency $K$, scale parameter $\gamma$, learning rate $\eta$, AdamW hyperparameters $(\beta_1,\beta_2,\epsilon,\lambda)$.
    \State Set $k \gets 1$, $\tilde W_0\gets W_0$, $A_0\gets 0$, $B_0\gets 0$.
    \State Initialize AdamW states $m_A\gets 0$, $v_A\gets 0$, $m_B\gets 0$, $v_B\gets 0$.
    \While{stopping criteria not satisfied}
        \If{$k-1\mod{K}=0$}
            \State $\tilde W_k\gets \tilde W_{k-1}+A_{k-1}B_{k-1}$.
            \State Compute stochastic full gradient $G_k$.
            \State $(U_k,\Lambda_k,V_k)\gets {\tt SVD}(-G_k)$. \Comment{Top-$r$ SVD of $G_k$}
            \State Set $(A_{k-1},B_{k-1})\gets (A_{\text{new}},B_{\text{new}})$ according to Appendix~\ref{subsec:smoothing_subspace}.
            \State Update $m_A,m_B,v_A,v_B,\beta_2$ according to Appendix~\ref{subsec:alignment}.
        \EndIf
        \State Compute gradients $g_{A,k} \gets \nabla_{A}\ell(\tilde W_k + A_{k-1}B_{k-1})$, $g_{B,k} \gets \nabla_{B}\ell(\tilde W_k + A_{k-1}B_{k-1})$.
        \State $m_A \gets \beta_1 m_A + (1-\beta_1) g_{A,k}$,\quad $v_A \gets \beta_2 v_A + (1-\beta_2) g_{A,k}\odot g_{A,k}$.
        \State $m_B \gets \beta_1 m_B + (1-\beta_1) g_{B,k}$,\quad $v_B \gets \beta_2 v_B + (1-\beta_2) g_{B,k}\odot g_{B,k}$.
        \State $\hat m_A \gets m_A / (1-\beta_1^k)$,\quad $\hat v_A \gets v_A / (1-\beta_2^k)$.
        \State $\hat m_B \gets m_B / (1-\beta_1^k)$,\quad $\hat v_B \gets v_B / (1-\beta_2^k)$.
        \State $A_k \gets A_{k-1} - \eta\lambda A_{k-1} - \eta\, \hat m_A / (\sqrt{\hat v_A} + \epsilon)$.
        \State $B_k \gets B_{k-1} - \eta\lambda B_{k-1} - \eta\, \hat m_B / (\sqrt{\hat v_B} + \epsilon)$.
        \State $k \gets k+1$.
    \EndWhile 
    \State \Return $\tilde W_k+A_kB_k$.

\end{algorithmic}
\end{algorithm}

\begin{algorithm}[ht]
\caption{{{\tt PESO-LoRA-T}: {\bf PESO} with {\bf LoRA} and Subspace Exploi{\bf T}ation} (Practical Version)}
\begin{algorithmic}[1]

    \Require Pretrained weights $W_0\in\mathbb{R}^{m\times n}$, initial subspace matrices $U_0\in\mathbb{R}^{m\times r}$, $V_0\in\mathbb{R}^{r\times n}$, initial coordinate $\xi_0\in\mathbb{R}^r$, frequency $K$, learning rate $\eta$, AdamW hyperparameters $(\beta_1,\beta_2,\epsilon,\lambda)$.
    \State Set $k \gets 1$.
    \State Initialize AdamW states $m_U,m_V,m_\xi \gets 0$ and $v_U,v_V,v_\xi \gets 0$.

    \While{stopping criterion not met}
        \State Keep $(U_{k},V_{k}) \gets (U_{k-1},V_{k-1})$.

        \If{$k-1 \bmod K = 0$}
            \State Compute gradient $g_{U,k} \gets \nabla_U \ell(W_0 + U_{k-1}\operatorname{diag}(\xi_{k-1})V_{k-1})$.
            \State Compute gradient $g_{V,k} \gets \nabla_V \ell(W_0 + U_{k-1}\operatorname{diag}(\xi_{k-1})V_{k-1})$.
            \State $m_U \gets \beta_1 m_U + (1-\beta_1) g_{U,k}$,\quad $v_U \gets \beta_2 v_U + (1-\beta_2) g_{U,k}\odot g_{U,k}$.
            \State $m_V \gets \beta_1 m_V + (1-\beta_1) g_{V,k}$,\quad $v_V \gets \beta_2 v_V + (1-\beta_2) g_{V,k}\odot g_{V,k}$.
            \State $\hat m_U \gets m_U/(1-\beta_1^k)$,\quad $\hat v_U \gets v_U/(1-\beta_2^k)$.
            \State $\hat m_V \gets m_V/(1-\beta_1^k)$,\quad $\hat v_V \gets v_V/(1-\beta_2^k)$.
            \State $U_k \gets U_{k-1} - \eta\lambda U_{k-1} - \eta\,\hat m_U / (\sqrt{\hat v_U}+\epsilon)$.
            \State $V_k \gets V_{k-1} - \eta\lambda V_{k-1} - \eta\,\hat m_V / (\sqrt{\hat v_V}+\epsilon)$.
        \EndIf

        \State Compute gradient $g_{\xi,k} \gets \nabla_{\xi}\ell(W_0 + U_{k}\operatorname{diag}(\xi_{k-1})V_{k})$.
        \State $m_\xi \gets \beta_1 m_\xi + (1-\beta_1) g_{\xi,k}$,\quad $v_\xi \gets \beta_2 v_\xi + (1-\beta_2) g_{\xi,k}\odot g_{\xi,k}$.
        \State $\hat m_\xi \gets m_\xi/(1-\beta_1^k)$,\quad $\hat v_\xi \gets v_\xi/(1-\beta_2^k)$.
        \State $\xi_k \gets \xi_{k-1} - \eta\lambda \xi_{k-1} - \eta\,\hat m_\xi / (\sqrt{\hat v_\xi}+\epsilon)$.

        \State $k \gets k+1$.
    \EndWhile

    \State \Return $W_0 + U_k \operatorname{diag}(\xi_k) V_k$.
\end{algorithmic}
\end{algorithm}

\section{Experimental Details}\label{app:experiments}
All experiments are conducted on NVIDIA RTX A6000 GPUs. For \texttt{PESO-LoRA-R}, to further reduce computational cost, we restrict the exploration frequency to at most two times per epoch.

\subsection{Natural Language Understanding: Hyperparameter Settings}\label{app:nlu_experiments}

In Section~\ref{subsec:nlu_tasks}, we present the results of our methods and various LoRA-based algorithms on natural language understanding tasks, following the prompt tuning configuration of~\citep{wang2024loraga}. The general hyperparameter settings are kept consistent across all algorithms which are shown in Table~\ref{tab:hyper_for_nlu}. To ensure a fair comparison, we follow~\citep{zhang2025one} and tune the learning rates via grid search over \(\{1\times 10^{-4}, 2\times 10^{-4}, 5\times 10^{-4}, 1\times 10^{-3}\}\). Additionally, following the choices of~\citet{zhang2025one}, the scale parameters for LoRA-One are set to be \{128, 16, 128, 128, 64\} for \{MNLI, SST-2, CoLA, QNLI, MRPC\}.

For \texttt{PESO-LoRA-R}, we set the smoothing parameter \(\tau_1=\tau_2=0.9\), with frequency \(K\) chosen as \(\{2000, 500, 100, 500, 40\}\) for \{MNLI, SST-2, CoLA, QNLI,
and MRPC\} based on empirical observations. When \((k-1) \bmod K = 0\) and \(k \neq 0\), we set the scale parameter \(\gamma = 1\); when \(k = 0\), the scale parameter is set the same as in LoRA-One.
 To further reduce computational cost, we restrict the restart frequency to times per epoch. For \texttt{PESO-LoRA-T}, we set frequency \(K=1\) for all datasets.

\begin{table}[ht]
\centering
\caption{Common hyperparameters for LoRA fine-tuning on T5-base model.}
\label{tab:hyper_for_nlu}
\resizebox{\textwidth}{!}{%
\begin{tabular}{ccccccc}
\toprule
Epoch & Optimizer & $(\beta_1, \beta_2)$ & $\epsilon$ & Batch Size & Weight Decay & LR Scheduler \\
\midrule
1 & AdamW & (0.9, 0.999) & $1 \times 10^{-8}$ & 32 & 0 & cosine \\
\midrule
Warm-up Ratio & LoRA Alpha & \#Runs & Sequence Length & Adapt Precision & Backbone Precision & Gradient Batch Size  \\
\midrule
0.03 & 16 & 3 & 128 & FP32 & FP32 & 8 \\
\bottomrule
\end{tabular}%
}
\end{table}

\subsection{Natural Language Generation: Hyperparameter Settings}\label{app:nlg_experiments}

\begin{table}[tb]
\centering
\caption{Performance of fine-tuned Llama-2-7B on natural language generation tasks with rank set to 8. Results are reported as accuracy (\%) over 3 runs. \textbf{Bold} and \underline{underline} indicate the highest and second-highest accuracies \emph{excluding} {\tt PESO-LoRA-T}.
}
\vskip -0.1in
\label{tab:result_nlg}
\centering
\begin{tabular}{c|ccccG}
\toprule
 & LoRA & LoRA-GA & LoRA-One & {\tt PESO-LoRA-R} & {\tt PESO-LoRA-T} \\
\midrule
GSM8K     
& 59.26$_{\pm0.99}$ 
& 56.44$_{\pm1.15}$ 
& \underline{60.44}$_{\pm0.17}$ 
& \textbf{60.55}$_{\pm0.34}$ 
& 60.82$_{\pm0.77}$ \\
MMLU        
& 45.73$_{\pm0.30}$ 
& 45.15$_{\pm0.57}$ 
& \textbf{47.24}$_{\pm0.20}$ 
& \underline{46.16}$_{\pm0.58}$ 
& 46.44$_{\pm0.37}$ \\
HumanEval   
& 25.85$_{\pm1.75}$ 
& 26.95$_{\pm1.30}$ 
& \underline{28.66}$_{\pm0.39}$ 
& \textbf{31.70}$_{\pm1.30}$ 
& 30.85$_{\pm1.18}$ \\
\bottomrule
\end{tabular}
\end{table}

\subsubsection{Comparison with LoRA-based methods}
For natural language generation tasks in Section~\ref{sec:nlg_tasks}, we follow the configuration of prompt tuning and strategy of hyperparameter tuning in~\citep{zhang2025one} to ensure fair comparison. We search the best learning rate over 
\(\{5 \times 10^{-4},\, 2 \times 10^{-4},\, 1 \times 10^{-4},\, 5 \times 10^{-5},\, 2 \times 10^{-5},\, 1 \times 10^{-5}\}\), and the general hyperparameter setting is summarized in Table~\ref{tab:hyper_for_nlg}. 
Additionally, following the choice of~\citet{zhang2025one}, the scale parameters are tuned within $\{16, 32, 64,128\}$ for LoRA-GA, LoRA-One and our methods to achieve the best performances.

\begin{table}[ht]
\centering
\caption{Common hyperparameters for LoRA fine-tuning on Llama-2-7B and LLama-3.1-8B model.}
\label{tab:hyper_for_nlg}
\resizebox{\textwidth}{!}{%
\begin{tabular}{ccccccc}
\toprule
Epoch & Optimizer & $(\beta_1, \beta_2)$ & $\epsilon$ & Batch Size & Weight Decay & LR Scheduler \\
\midrule
1 & AdamW & (0.9, 0.999) & $1 \times 10^{-8}$ & 32 & 0 & cosine \\
\midrule
Warm-up Ratio & LoRA Alpha & \#Runs & Sequence Length & Adapter Precision & Backbone Precision &  Gradient Batch Size  \\
\midrule
0.03 & 16 & 3 & 1024 & FP32 & BF16 & 8  \\
\bottomrule
\end{tabular}%
}
\end{table}

For \texttt{PESO-LoRA-R}, we set the smoothing parameter \(\tau_1=\tau_2=0.9\), with frequency \(K=500\) for all the experiments. When \((k-1) \bmod K = 0\) and \(k \neq 0\), we set the scale parameter \(\gamma = 1\); when \(k = 0\), the scale parameter is set the same as in LoRA-One.
For \texttt{PESO-LoRA-T}, we set frequency \(K=1\) for all datasets.

{\subsubsection{Comparison with pretraining-oriented subspace methods} \label{app:compare_with_pretraining}}

{For comparison with pretraining-oriented subspace-based approaches, we follow the experimental setup described in \citet{zhu2024apollo}. 
Specifically, we fine-tune Llama-3-8B-Instruct on the Alpaca-en-demo dataset for 3 epochs and evaluate the resulting models on the MMLU subtasks. 
For GaLore~\citep{zhao2024galore}, Fira~\citep{chen2024fira}, APOLLO~\citep{zhu2024apollo}, and our \texttt{PESO-LoRA-R}, we report the best accuracy 
achieved by sweeping the learning rate over
\(
\{5\mathrm{e}{-6},\, 7.5\mathrm{e}{-6},\, 1\mathrm{e}{-5},\, 2.5\mathrm{e}{-5},\, 5\mathrm{e}{-5},\, 7.5\mathrm{e}{-5},\, 1\mathrm{e}{-4},\, 1.5\mathrm{e}{-4},\, 2\mathrm{e}{-4}\}.
\)
All methods use a fixed rank of $r=8$. For \texttt{PESO-LoRA-R}, the scale parameter is set to $\gamma=16$.

The results are shown in Table~\ref{tab:subspace_comparison}, demonstrating that our PEFT-specialized method \texttt{PESO-LoRA-R} consistently outperforms these pretraining-oriented baselines in the fine-tuning regime.
}



{\subsection{Natural Language Generation: Ablation Study and Computational Cost}\label{app:nlg_experiments_ablation}}

{We conduct an ablation study on \texttt{PESO-LoRA-R} by fine-tuning Llama-3.1-8B and evaluating the model on GSM8K. 
We vary the rank $r$ and the restart frequency $K$, and additionally compare using one versus two restarts per epoch. 
We set the scale parameter to $\gamma = 128$, and all other hyperparameters, unless otherwise noted, follow Appendix~\ref{app:nlg_experiments}. 
The results are presented in Table~\ref{tab:ablation_rank} and~\ref{tab:ablation_restart}.}

{Overall, the results indicate that \texttt{PESO-LoRA-R} does not require a large rank to achieve strong performance: accuracy 
improves substantially even at small ranks, with diminishing gains beyond $r=8$. Furthermore, the method shows robust behavior 
across a wide range of restart frequencies, as performance remains stable when varying $K$ or the number of restarts performed 
per epoch.}

{We also report memory and runtime for Llama-3.1-8B fine-tuning on GSM8K with rank $r=8$, $K=500$, and restricting one restart per epoch. As shown in Table~\ref{tab:ablation_cost}, \texttt{PESO-LoRA-R} matches the memory footprint of vanilla LoRA and incurs only a negligible increase in runtime. Specifically, the SVD (incurred when doing restart) step accounts for only 1.9\% of the total computation time in our experiments, indicating that its overhead is negligible.}

\begin{table}[t]
\centering
\caption{Ablation on rank $r$ for \texttt{PESO-LoRA-R}, evaluated on GSM8K with restart frequency $K=500$.}
\label{tab:ablation_rank}
\begin{tabular}{c|ccccccc}
\toprule
Rank $r$ & 1 & 2 & 4 & 8 & 16 & 32 & 64 \\
\midrule
Acc (\%) & 75.28 & 75.97 & 76.88 & 77.56 & 77.71 & 78.01 & 78.39 \\
\bottomrule
\end{tabular}
\end{table}

\begin{table}[t]
\centering
\caption{Ablation on restart frequency $K$ and restart count (once or twice per epoch), evaluated on GSM8K with rank $r=8$. 
Infinity denotes applying exploration only at initialization.}
\label{tab:ablation_restart}
\begin{tabular}{c|ccccc}
\toprule
$K$ & 250 & 500 & 750 & 1000 & Infinity \\
\midrule
Restart Once  & 77.18 & 77.56 & 77.18 & 77.26 & 77.26 \\
Restart Twice & 76.88 & 77.30 & 77.56 & 76.88 & -- \\
\bottomrule
\end{tabular}
\end{table}

\begin{table}[t]
\centering
\caption{Memory (excluding transient peaks) and time for Llama-3.1-8B fine-tuning on GSM8K with $r=8$, $K=500$, and one restart per epoch.}
\label{tab:ablation_cost}
\begin{tabular}{c|cc}
\toprule
Method & LoRA & \texttt{PESO-LoRA-R} \\
\midrule
Time   & 5h03m & 5h09m \\
Memory & 25.7G & 25.7G \\
\bottomrule
\end{tabular}
\end{table}

\subsection{Multi-Epoch Low-Rank Analysis: Hyperparameter Settings}
In Section~\ref{sec:long_epoch}, we fine-tune the T5-base model on SST-2 dataset for 4 epochs. We vary the rank of LoRA in \(\{2,4,8\}\), keep the rank of \texttt{PESO-LoRA-R} as 2, and add full-parameter fine-tuning for comparison. We keep all the other hyperparameter settings the same as in~\ref{app:nlu_experiments}.

\section{Asymptotic Convergence}\label{sec:asym}
In this section, we analyze the asymptotic convergence of the general PESO framework (Algorithm~\ref{alg:generic_subspace}). 
To maintain generality, we do not restrict the specific choices of subspaces $\{\mathcal{S}_k\}$ or the subroutines {\tt Opt} and {\tt UpdateM}. 
Consequently, the derived convergence bounds may contain a bias term driven by the alignment quality between the subspace and the true gradient.
We formalize this quality by assuming the subspace $\mathcal{S}_k$ sufficiently captures the gradient $G_k$ at restart indices.

\begin{assumption}\label{assump:accurate_subspace}
    There exists a sequence $\{\delta_k\}\geq 0$ such that $dist(G_k,\mathcal{S}_k)\leq \delta_k$ for $k$ where full gradient restart is implemented.
    Furthermore, $\lim_{k\to\infty}
    \delta_k<\infty$.
\end{assumption}

\begin{rem}
    In the general case, if $\liminf_{k\to\infty} \delta_k > 0$, the convergence established in this section may contain a bias term depending on the approximation error. 
    However, this bias can be eliminated in specific instantiations:
    (i) if $\mathcal{S}_k$ is chosen such that $G_k \in \mathcal{S}_k^{\text{FG}}$, then $\delta_k=0$ and the optimality measure converges to zero.
    (ii) in a deterministic setting, if $\mathcal{S}_k$ is chosen as the top-$r$ SVD subspace of $G_k$ (as in {\tt PESO-LoRA-R}), the bias term vanishes. In this case, we establish exact asymptotic convergence, i.e., $\liminf_{k\to\infty} \|G_k\| = 0$. We provide the full details in Appendix~\ref{app:additional_theory}.
    (iii) Even in the stochastic case, exact convergence can be recovered. By specifying the subspace construction as top-$r$ SVD (to control projection bias) and adopting a carefully tuned hyperparameter schedule, {\tt PESO-LoRA-R} achieves exact convergence. The proof of this result (Theorem~\ref{thm:peso_rate}) is provided in Appendix~\ref{sec:proof_details}.
\end{rem}

\subsection{Deterministic Case}
For completeness, we first consider the deterministic setting, where the gradients $G_k$ used by PESO (Algorithm~\ref{alg:generic_subspace}) are exact and free of stochastic noise.  
We begin by stating a mild assumption on the behavior of {\tt Opt} and {\tt UpdateM}, which ensures that each iteration produces a descent step.

\begin{assumption}\label{assump:opt_descent}
    {\tt Opt} and {\tt UpdateM} generate the updates satisfying $\ell(W_k)\leq \ell(\tilde W_{k}+\mathcal{M}_k(\xi_{k-1}))$ and $\ell(\tilde W_{k}+\mathcal{M}_{k}(\xi_{k-1})) \leq \ell(W_{k-1})$ for all $k=1,2,\cdots$.
\end{assumption}

Assumption~\ref{assump:opt_descent} requires that both {\tt Opt} and {\tt UpdateM} are descent-preserving, ensuring that the objective value is non-increasing across iterations.
This condition is standard and holds, for example, when both procedures are implemented via gradient descent with step size $\alpha \le 1/L$, where $L$ is the Lipschitz constant of the gradient as specified in Assumption~\ref{assump:loss_regular}.  
In particular, {\tt UpdateM} can be implemented as a warm-started gradient step on the subspace parameters with respect to the original loss function, initialized from the previous iterate.  
Such descent guarantees follow from classical smooth optimization results; see, e.g., \citet{nesterov2018lectures}.

\begin{prop}
    Let {\tt Opt} and {\tt UpdateM} be gradient descent schemes on $\ell$ with constant learning rate $\alpha \le 1/L$.  
    Then Assumption~\ref{assump:opt_descent} holds.
\end{prop}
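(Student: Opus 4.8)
The plan is to reduce both inequalities in Assumption~\ref{assump:opt_descent} to a single tool: the standard descent lemma for gradient descent on an $L$-smooth function. Recall that if $f$ is differentiable with $L$-Lipschitz gradient, then the quadratic upper bound $f(y) \le f(x) + \langle \nabla f(x), y-x\rangle + \tfrac{L}{2}\|y-x\|^2$ holds, and substituting the gradient step $y = x - \alpha\nabla f(x)$ with $0 < \alpha \le 1/L$ gives $f(y) \le f(x) - \alpha\bigl(1 - \tfrac{L\alpha}{2}\bigr)\|\nabla f(x)\|^2 \le f(x)$. Since both \texttt{Opt} and \texttt{UpdateSubspace} are, by hypothesis, single gradient descent steps on $\ell$ restricted to the variable each subroutine controls, each of the two inequalities will be a direct instance of this bound.

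For the first inequality, $\ell(W_k) \le \ell(\tilde W_k + \mathcal{M}_k(\xi_{k-1}))$, I would note that \texttt{Opt} sends $\xi_{k-1} \mapsto \xi_k$ with $\tilde W_k$ and $\mathcal{M}_k$ held fixed, and introduce the reduced objective $g_k(\xi) := \ell(\tilde W_k + \mathcal{M}_k(\xi))$, on which \texttt{Opt} performs exactly one gradient step. By the chain rule $\nabla_\xi g_k(\xi) = J_{\mathcal{M}_k}^\top \nabla_W \ell(\tilde W_k + \mathcal{M}_k(\xi))$, so the $L$-smoothness of $\ell$ together with boundedness of the Jacobian of $\mathcal{M}_k$ shows that $g_k$ has a Lipschitz gradient; applying the descent lemma to $g_k$ at $\xi_{k-1}$ yields $g_k(\xi_k) \le g_k(\xi_{k-1})$, which is exactly the claim.

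The second inequality, $\ell(\tilde W_k + \mathcal{M}_k(\xi_{k-1})) \le \ell(W_{k-1})$, is handled symmetrically in the warm-start mode, where \texttt{UpdateSubspace} runs gradient descent on the parameters $\theta$ defining $\mathcal{M}$ while freezing $\xi = \xi_{k-1}$ and leaving $\tilde W_k = \tilde W_{k-1}$. Forming the objective $\theta \mapsto \ell(\tilde W_{k-1} + \mathcal{M}_\theta(\xi_{k-1}))$ and taking one gradient step decreases its value by the same lemma, and since $W_{k-1} = \tilde W_{k-1} + \mathcal{M}_{k-1}(\xi_{k-1})$ this is the desired bound. The point requiring care — the main obstacle — is the effective smoothness constant produced by the chain rule: the reduced objectives inherit a Lipschitz gradient constant of the form $L$ times the squared operator norm of the relevant Jacobian rather than $L$ itself. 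When $\mathcal{M}_k$ is linear with unit-norm Jacobian (for instance an orthonormal projection) this constant equals $L$ and the stated step size $\alpha \le 1/L$ suffices verbatim; in the general case one simply rescales $\alpha$ by the Jacobian norm, leaving the descent conclusion intact.
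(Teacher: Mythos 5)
Your proof is correct and follows essentially the same route as the paper, which states the proposition as a standard consequence of the descent lemma (citing Nesterov) without writing out the argument: one applies the quadratic upper bound to the reduced objective that \texttt{Opt} minimizes over $\xi$ and, in warm-start mode, to the objective that \texttt{UpdateSubspace} minimizes over the parameters of $\mathcal{M}_k$. Your additional observation---that the chain rule makes the effective smoothness constant $L\|J_{\mathcal{M}_k}\|_{\mathrm{op}}^2$ rather than $L$, so $\alpha \le 1/L$ suffices verbatim only when the Jacobian has unit operator norm (e.g., orthonormal projection subspaces), and for bilinear LoRA-type parameterizations only on bounded sets where the Jacobian is controlled---is a legitimate refinement of a point the paper glosses over.
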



We now present the deterministic convergence result.

\begin{thm}\label{thm:conv_descent_lem}
    Suppose Assumptions~\ref{assump:loss_regular}, \ref{assump:accurate_subspace}, and \ref{assump:opt_descent} hold.  
    With full gradient restart enabled and learning rate $\eta_k=\tfrac{1}{L}$, the iterates $\{W_k\}$ generated by Algorithm~\ref{alg:generic_subspace} satisfy 
    $\liminf_{k\to\infty} \|G_k\|\leq \lim_{k\to\infty}\delta_k$.
\end{thm}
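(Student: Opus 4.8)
The plan is to measure the loss decrease produced by the projected gradient step at each restart, sum these decrements against the lower bound on $\ell$ to force the projected gradients to vanish, and then convert this into a statement about the full gradients $G_k$ through the subspace-alignment bound $\delta_k$.

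First I would restrict attention to the restart iterations $\mathcal{K} = \{k : (k-1)\bmod K = 0\}$. At such a $k$, the history-absorbing step gives $\tilde W_k = W_{k-1}$, so that $G_k = \nabla\ell(W_{k-1})$, and the restart initialization places the iterate at the projected-gradient point $\tilde W_k + \mathcal{M}_k(\xi_k^{\mathrm{new}}) = W_{k-1} - \eta_k P_k G_k$, where I abbreviate $P_k := P_{\mathcal{S}_k^{\mathrm{FG}}}$. Applying the descent lemma implied by $L$-smoothness (Assumption~\ref{assump:loss_regular}), and using that an orthogonal projection is self-adjoint and idempotent so that $\langle G_k, P_k G_k\rangle = \|P_k G_k\|^2$, together with the choice $\eta_k = 1/L$, yields the quantitative decrease
\begin{equation*}
\ell\big(W_{k-1} - \eta_k P_k G_k\big) \le \ell(W_{k-1}) - \tfrac{1}{2L}\|P_k G_k\|^2 .
\end{equation*}
The subsequent {\tt Opt} step only lowers the loss further by Assumption~\ref{assump:opt_descent}, giving $\ell(W_k) \le \ell(W_{k-1}) - \tfrac{1}{2L}\|P_k G_k\|^2$ for $k \in \mathcal{K}$; on non-restart iterations the same assumption provides plain monotonicity $\ell(W_k)\le\ell(W_{k-1})$.

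Next I would telescope. Because $\ell$ is bounded below and $\{\ell(W_k)\}$ is non-increasing, the sequence converges, and summing the per-restart decrements gives $\sum_{k\in\mathcal{K}} \tfrac{1}{2L}\|P_k G_k\|^2 \le \ell(W_0) - \inf \ell < \infty$, whence $\|P_k G_k\| \to 0$ along $\mathcal{K}$. To pass to the full gradient I would use the orthogonal decomposition $\|G_k\|^2 = \|P_k G_k\|^2 + \operatorname{dist}(G_k,\mathcal{S}_k)^2$ and Assumption~\ref{assump:accurate_subspace} to obtain $\|G_k\| \le \|P_k G_k\| + \delta_k$. Taking $\liminf$ along $\mathcal{K}$ and invoking $\|P_k G_k\|\to 0$ together with $\delta_k \to \lim_k \delta_k$ then delivers the claim $\liminf_{k\to\infty}\|G_k\| \le \lim_{k\to\infty}\delta_k$.

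The step demanding the most care is the first one: reconciling the restart bookkeeping with Assumption~\ref{assump:opt_descent}. One must verify that the history-absorbing identity $\tilde W_k = W_{k-1}$ indeed holds, that the restart lands the iterate \emph{exactly} at the projected-gradient point so the descent lemma applies verbatim, and that the gain $-\tfrac{1}{2L}\|P_k G_k\|^2$ is preserved through the following exploitation step rather than being undone by the change of subspace. A secondary subtlety is that $G_k$ is only defined on the restart subsequence, so the $\liminf$ in the conclusion should be read along $\mathcal{K}$; since restarts recur every $K$ iterations this subsequence is infinite, and the statement remains meaningful.
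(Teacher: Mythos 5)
Your proposal is correct and follows essentially the same route as the paper's proof: a descent-lemma estimate at the restart point $W_{k-1}-\tfrac{1}{L}P_{\mathcal{S}_k}(G_k)$ giving the $\tfrac{1}{2L}\|P_{\mathcal{S}_k}(G_k)\|^2$ decrease, monotonicity from Assumption~\ref{assump:opt_descent} on all other steps, telescoping against the lower bound on $\ell$ to force $\|P_{\mathcal{S}_k}(G_k)\|\to 0$ along the restart subsequence, and the triangle inequality with $\delta_k$ to conclude. The only cosmetic difference is that you sum the decrements to get summability of $\|P_{\mathcal{S}_k}(G_k)\|^2$, whereas the paper invokes convergence of the monotone sequence $\{\ell(W_k)\}$ and sends consecutive differences to zero; these are interchangeable, and your bookkeeping of the restart identities ($\tilde W_k = W_{k-1}$, exact landing at the projected-gradient point) matches the paper's.
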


\begin{proof}
We assume the frequency of the full gradient restart is $K$.
By the descent lemma, whenever $k-1 \mod K = 0$ (i.e., when a full gradient restart occurs), define
\begin{equation}
    \hat W_{k}:=W_{k-1}-\tfrac{1}{L}P_{\SSS_k}(G_k).
\end{equation}
Performing a full gradient restart as {\tt UpdateM}, as discussed in~\eqref{eq:grad_restart_generic}, is thus equivalent to moving from $W_{k-1}$ to $\hat W_k$.  
It follows that
\begin{equation}
\begin{split}
     \ell(\hat W_{k})=\ell(W_{k-1}-\tfrac{1}{L}P_{\SSS_k}(G_k)) 
     & \leq \ell(W_{k-1})+\left\langle G_k,-\tfrac{1}{L}P_{\SSS_k}(G_k) \right\rangle+\tfrac{L}{2}\|\tfrac{1}{L}P_{\SSS_k}(G_k)\|^2 \\
     & = \ell(W_{k-1})-\tfrac{1}{L}\|P_{\SSS_k}(G_k)\|^2+\tfrac{L}{2}\|\tfrac{1}{L}P_{\SSS_k}(G_k)\|^2\\
     & = \ell(W_{k-1})-\tfrac{1}{2L}\|P_{\SSS_k}(G_k)\|^2,
\end{split}
\end{equation}
where the second equality holds because projection onto a subspace is orthogonal.  

Therefore, by Assumption~\ref{assump:opt_descent}, for iterates $i=k,\ldots,k+K-1$ (note that $\ell(W_k)\leq \ell(\hat W_k)$ since $W_k$ is obtained by applying {\tt Opt} to $\hat W_k$),
\begin{equation}\label{eq:descent_K2}
    \tfrac{1}{2L}\|P_{\SSS_k}(G_k)\|^2\leq \ell(W_{k-1})-\ell(\hat W_{k})\leq \ell(W_{k-1})-\ell(W_i).
\end{equation}
Importantly,~\eqref{eq:descent_K2} remains valid regardless of how frequently other types of {\tt UpdateM} are applied between full gradient restarts, since all updates preserve the descent property by Assumption~\ref{assump:opt_descent}.  
In particular, whenever $\mathcal{M}_k$ is updated without a full gradient restart, we have
\begin{equation}
\begin{split}
        \ell(W_{k-1})-\ell(W_{k}) 
        & =\ell(\tilde W_{k-1}+\mathcal{M}_{k-1}(\xi_{k-1}))-\ell(W_k) \\
        & \geq \ell(\tilde W_{k-1}+\mathcal{M}_{k-1}(\xi_{k-1}))-\ell(\tilde W_{k}+\mathcal{M}_{k}(\xi_{k-1})) \\
        & \geq 0.
\end{split}
\end{equation}
This ensures the chain of inequalities in~\eqref{eq:descent_K2} continues to hold when updates are performed by {\tt OptM}.  

Hence, for any integer $k \in \mathbb{N}$,
\begin{equation}
    \tfrac{1}{2L}\|P_{\SSS_{kK+1}}(G_{kK+1})\|^2\leq \ell(W_{kK})-\ell(W_{(k+1)K}).
\end{equation}
Here $kK$ and $(k+1)K$ denote integer products.  

Since $\{\ell(W_k)\}$ is bounded below (Assumption~\ref{assump:loss_regular}) and monotonically decreasing (Assumption~\ref{assump:opt_descent} together with the descent lemma at restart points), it converges by the monotone convergence theorem and is Cauchy.  
Thus $\ell(W_k)-\ell(W_{k+1})\to 0$, and 
\begin{equation}\label{eq:projection_converge}
    \tfrac{1}{2L}\|P_{\SSS_{kK+1}}(G_{kK+1})\|^2\to 0.
\end{equation}
Finally, note that
\begin{equation}
    \|G_{kK+1}\|\leq \mathrm{dist}(G_{kK+1},\mathcal{S}_{kK+1})+\|P_{\SSS_{kK+1}}(G_{kK+1})\|
\leq \delta_{kK+1}+\|P_{\SSS_{kK+1}}(G_{kK+1})\|\to \delta,
\end{equation}
where $\delta:=\lim_{k\to\infty}\delta_k$.  
Therefore, $\liminf_{k\to\infty}\|G_k\|\leq \delta$.
\end{proof}

\subsection{Exact Convergence of Deterministic {\tt PESO-LoRA-R}}\label{app:additional_theory}

In this section, we present a simple case study in which {\tt PESO-LoRA-R} admits exact asymptotic convergence. 
We consider the deterministic setting where the subspace $\mathcal{S}_k^{\text{FG}}$ is chosen as the rank-$r$ subspace spanned by the top singular vectors of $G_k$, consistent with the construction used in {\tt PESO-LoRA-R}.  
Under this setting, we show that the algorithm converges asymptotically to a stationary point without approximation error induced by subspace restriction. 
This result is consistent with the result in \citet{he2024subspace}, which establishes exact convergence for GaLore under deterministic gradients.

We first present the following lemma, which bounds the error introduced by the projected gradients.
\begin{lem}\label{lem:twosided}
Let $G\in\mathbb{R}^{m\times n}$ have singular value decomposition
\[
G = U \Sigma V^\top,\qquad 
\Sigma = \mathrm{diag}(\sigma_1,\dots,\sigma_p),\quad
\sigma_1 \ge \cdots \ge \sigma_p \ge 0,
\]
where $p=\min\{m,n\}$. Let
\[
U_r := U[:,1:r],\quad
V_r := V[:,1:r],\quad
\Lambda_r := \mathrm{diag}(\sigma_1,\dots,\sigma_r),
\]
and define $G_r := U_r \Lambda_r V_r^\top$. Then the following bounds hold:
\begin{equation}\label{eq:twosided-bound}
\|G - G_r\|_F^2\;\le\;\Bigl(1 - \frac{r}{p}\Bigr)\,\|G\|_F^2
\quad\text{and}\quad
\|G_r\|_F^2\;\ge\;\frac{r}{p}\,\|G\|_F^2.
\end{equation}
\end{lem}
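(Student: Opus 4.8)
The plan is to read $\|\cdot\|$ as the Frobenius norm, consistent with the identity $\|G\|^2 = \sum_{i=1}^p \sigma_i^2$ implicit in the statement (note the bound is \emph{false} for the operator norm whenever the spectrum is flat, so the Frobenius reading is forced). The first step is to rewrite both sides of~\eqref{eq:twosided-bound} purely in terms of the singular values. Since $G - G_r = U(\Sigma - \Sigma_r)V^\top$, where $\Sigma_r$ keeps the top $r$ singular values and zeros the rest, and since the Frobenius norm is invariant under the orthogonal factors $U$ and $V$, I obtain $\|G - G_r\|^2 = \sum_{i=r+1}^p \sigma_i^2$ and $\|G\|^2 = \sum_{i=1}^p \sigma_i^2$. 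Writing $H := \sum_{i=1}^r \sigma_i^2$ (the head) and $T := \sum_{i=r+1}^p \sigma_i^2$ (the tail), the target bound~\eqref{eq:twosided-bound} becomes $T \le (1 - r/p)(H + T)$, which after clearing denominators reduces to the elementary inequality $(p-r)\,H \ge r\,T$.

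The second step is to establish $(p-r)H \ge rT$ using only the ordering $\sigma_1 \ge \cdots \ge \sigma_p \ge 0$. The cleanest route is a pairing argument: for every pair $i \le r < j$ one has $\sigma_i^2 \ge \sigma_j^2$, so summing over all such pairs gives
\[
(p - r)\,H = \sum_{i=1}^r \sum_{j=r+1}^p \sigma_i^2 \;\ge\; \sum_{i=1}^r \sum_{j=r+1}^p \sigma_j^2 = r\,T,
\]
which is exactly what is needed. Equivalently, this is the standard fact that the average of the $r$ largest terms of a sorted nonnegative sequence dominates the average of all $p$ terms, $\tfrac1r H \ge \tfrac1p(H+T)$; I would present whichever phrasing reads most smoothly in context.

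I do not expect a genuine obstacle here: the result is a one-line consequence of monotonicity of the singular values combined with orthogonal invariance of the Frobenius norm, and in particular it does \emph{not} require the Eckart--Young optimality of $G_r$—the explicit truncation suffices. The only points deserving a line of care are confirming the intended norm, as noted above, and the boundary cases $r = p$ (both sides vanish) and $r = 0$ (trivial), both of which the pairing argument handles automatically.
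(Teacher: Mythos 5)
Your proof is correct and takes essentially the same approach as the paper's: reduce both sides to singular values, then use the sortedness of $\sigma_1,\dots,\sigma_p$ to show the head sum $\sum_{i=1}^r\sigma_i^2$ is at least $\tfrac{r}{p}\|G\|^2$ (your pairing argument is just an expanded proof of the paper's averaging inequality). The only cosmetic difference is that you compute $\|G-G_r\|^2=\sum_{i=r+1}^p\sigma_i^2$ directly from orthogonal invariance, whereas the paper cites Eckart--Young--Mirsky for this identity---which, as you correctly observe, is not actually needed.
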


\begin{proof}
By the Eckart--Young--Mirsky theorem,
\[
\|G - G_r\|_F^2
= \sum_{i=r+1}^p \sigma_i^2,
\qquad
\|G\|_F^2 = \sum_{i=1}^p \sigma_i^2,
\qquad
\|G_r\|_F^2 = \sum_{i=1}^r \sigma_i^2.
\]
Since the singular values are sorted in nonincreasing order, we have
\[
\frac{1}{r}\sum_{i=1}^r \sigma_i^2
\;\ge\;
\frac{1}{p}\sum_{i=1}^p \sigma_i^2,
\]
which implies the second inequality in \eqref{eq:twosided-bound}:
\[
\|G_r\|_F^2 = \sum_{i=1}^r \sigma_i^2
\;\ge\;
\frac{r}{p}\sum_{i=1}^p \sigma_i^2
= \frac{r}{p}\,\|G\|_F^2.
\]
The first inequality follows immediately from the Pythagorean identity $\|G\|_F^2 = \|G_r\|_F^2 + \|G - G_r\|_F^2$:
\[
\|G - G_r\|_F^2
= \|G\|_F^2 - \|G_r\|_F^2
\;\le\;
\|G\|_F^2 - \frac{r}{p}\,\|G\|_F^2
=
\Bigl(1 - \frac{r}{p}\Bigr)\,\|G\|_F^2.
\]
\end{proof}

Next, we state the exact convergence result.

\begin{thm}\label{thm:conv_exact}
Suppose Assumption~\ref{assump:loss_regular} and \ref{assump:opt_descent} hold.
Assume that full-gradient restarts are enabled, the learning rate is chosen as $\eta_k = \tfrac{1}{L}$, and the subspace $\mathcal{S}_k^{\mathrm{FG}}$ is selected as the top-$r$ SVD subspace of $G_k$, as in {\tt PESO-LoRA-R}.
Then the iterates $\{W_k\}$ generated by Algorithm~\ref{alg:generic_subspace} satisfy $\liminf_{k\to\infty}\|G_k\| = 0$.
\end{thm}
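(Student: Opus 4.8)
The plan is to reuse the machinery of the deterministic Theorem~\ref{thm:conv_descent_lem} and then exploit the specific SVD subspace construction to upgrade its conclusion $\liminf\|G_k\|\le\delta$ into exact convergence $\liminf\|G_k\|=0$. The key observation is that when $\mathcal{S}_k^{\mathrm{FG}}$ is the top-$r$ SVD subspace of $G_k$, the orthogonal projection $P_{\mathcal{S}_k}(G_k)$ equals the rank-$r$ truncation $G_r$ of Lemma~\ref{lem:twosided}, so the truncation error is controlled \emph{relative to} $\|G_k\|$ rather than by an additive constant $\delta_k$. This multiplicative control is exactly what lets the bias term vanish.

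First I would follow verbatim the first part of the proof of Theorem~\ref{thm:conv_descent_lem}: at each restart index with $k-1\bmod K=0$, set $\hat W_k:=W_{k-1}-\tfrac{1}{L}P_{\mathcal{S}_k}(G_k)$ and invoke the descent lemma (Assumption~\ref{assump:loss_regular}) to get $\ell(\hat W_k)\le\ell(W_{k-1})-\tfrac{1}{2L}\|P_{\mathcal{S}_k}(G_k)\|^2$. Assumption~\ref{assump:opt_descent} guarantees every intervening {\tt Opt} and {\tt UpdateSubspace} step is non-increasing, so telescoping over one restart cycle yields
\begin{equation}
    \tfrac{1}{2L}\,\|P_{\mathcal{S}_{kK+1}}(G_{kK+1})\|^2 \;\le\; \ell(W_{kK}) - \ell(W_{(k+1)K}).
\end{equation}
Since $\ell$ is bounded below and $\{\ell(W_k)\}$ is monotonically non-increasing, it converges, the right-hand side tends to zero, and hence $\|P_{\mathcal{S}_{kK+1}}(G_{kK+1})\|^2\to 0$.

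The crucial new step is to relate $\|P_{\mathcal{S}_k}(G_k)\|$ back to the full gradient norm through the SVD structure. Because $\mathcal{S}_k^{\mathrm{FG}}$ is spanned by the top-$r$ singular directions of $G_k$, orthonormality of the rank-one factors gives $P_{\mathcal{S}_k}(G_k)=G_r$, and Pythagoras gives $\|G_k\|^2=\|P_{\mathcal{S}_k}(G_k)\|^2+\|G_k-G_r\|^2$. Feeding $\|G_k-G_r\|^2\le(1-r/p)\|G_k\|^2$ from Lemma~\ref{lem:twosided} and rearranging produces the reverse inequality
\begin{equation}
    \|P_{\mathcal{S}_k}(G_k)\|^2 = \|G_r\|^2 \;\ge\; \tfrac{r}{p}\,\|G_k\|^2,
    \qquad\text{equivalently}\qquad
    \|G_k\|^2 \;\le\; \tfrac{p}{r}\,\|P_{\mathcal{S}_k}(G_k)\|^2.
\end{equation}
Combining this with $\|P_{\mathcal{S}_{kK+1}}(G_{kK+1})\|^2\to 0$ forces $\|G_{kK+1}\|^2\to 0$ along the restart subsequence, whence $\liminf_{k\to\infty}\|G_k\|=0$.

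I expect the only genuine subtlety to be justifying the identity $P_{\mathcal{S}_k}(G_k)=G_r$ precisely, namely checking that orthogonal projection onto the (diagonal) SVD subspace $\{U_r\operatorname{diag}(\xi)V_r\}$ returns $\sum_{i\le r}\sigma_i u_i v_i^\top$ because the basis matrices $u_i v_i^\top$ are orthonormal with $\langle G_k, u_i v_i^\top\rangle=\sigma_i$. Once this identification is secured, Lemma~\ref{lem:twosided} supplies the decisive multiplicative lower bound on the projected norm, and everything else is a direct transcription of the deterministic argument.
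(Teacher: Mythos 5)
Your proposal is correct and takes essentially the same route as the paper's proof: both reuse the deterministic argument of Theorem~\ref{thm:conv_descent_lem} to conclude $\|P_{\mathcal{S}_{kK+1}}(G_{kK+1})\|\to 0$ along restart indices, and then invoke Lemma~\ref{lem:twosided} to turn this into a multiplicative lower bound on the projected gradient, forcing $\|G_{kK+1}\|\to 0$. The only difference is the last algebraic step—you use orthogonality (Pythagoras) to obtain $\|P_{\mathcal{S}_k}(G_k)\|\ge\sqrt{r/p}\,\|G_k\|$, whereas the paper uses the triangle inequality to obtain $\bigl(1-\sqrt{1-r/p}\bigr)\|G_k\|\le\|P_{\mathcal{S}_k}(G_k)\|$; both are valid, and your constant is in fact slightly tighter, with your explicit verification that $P_{\mathcal{S}_k}(G_k)$ equals the rank-$r$ truncation being a point the paper leaves implicit.
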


\begin{proof}
The proof follows the same structure as that of Theorem~\ref{thm:conv_descent_lem}, with one key modification.
Since now $\mathcal{S}_{kK+1}$ is chosen as the top-$r$ SVD subspace of $G_{kK+1}$,
\begin{equation} 
\begin{split} 
\|G_{kK+1}\| & = \|G_{kK+1}-P_{\SSS_{kK+1}}(G_{kK+1})+P_{\SSS_{kK+1}}(G_{kK+1})\| \\ 
&\leq \sqrt{1 - \frac{r}{p}}\,\|G_{kK+1}\|+\|P_{\SSS_{kK+1}}(G_{kK+1})\|, 
\end{split} 
\end{equation} 
where $p:=\min\{m,n\}$ and the inequality follows from Lemma~\ref{lem:twosided} and the triangle inequality. 
It follows from~\eqref{eq:projection_converge} that 
\[ \left( 1-\sqrt{1 - \frac{r}{p}} \right) \|G_{kK+1}\|\leq \|P_{\SSS_{kK+1}}(G_{kK+1})\|\to 0, \] 
and therefore $\liminf_{k\to\infty} \|G_k\|=0$.
\end{proof}

\subsection{Stochastic Case}

We now turn to the stochastic setting, where only a stochastic estimator $\tilde G_k$ of the true gradient $G_k$ is available.
Our goal is to establish asymptotic convergence under standard assumptions on the learning rates and the stochasticity of the updates.
We begin by stating a condition on the learning rates used in the full-gradient restart step.

\begin{assumption}\label{assump:lr_stochastic}
     The learning rates for full gradient restart in~\eqref{eq:grad_restart_generic} satisfies $\sum \eta_k=\infty$ and $\sum \eta_k^2<\infty$.
\end{assumption}

Assumption~\ref{assump:lr_stochastic} is standard in stochastic optimization; see, e.g., \citet{bottou2018optimization}.  
We next impose a stochastic descent condition on the updates generated by {\tt Opt} and {\tt UpdateM}.

\begin{assumption}\label{assump:descent_exp}
    {\tt Opt} and {\tt UpdateM} generate the updates satisfying $\mathbb{E}[\ell(W_{k})]\leq \mathbb{E}[\ell(\tilde W_k+\mathcal{M}_k(\xi_{k-1}))]+C_{k}$ and $\mathbb{E}[\ell(\tilde W_{k}+\mathcal{M}_{k}(\xi_{k-1}))] \leq \mathbb{E}[\ell(W_{k-1})]+C_k$ where $\sum_k|C_k|<\infty$.
\end{assumption}
Assumption~\ref{assump:descent_exp} can be viewed as a stochastic analogue of Assumption~\ref{assump:opt_descent}, allowing for controlled noise in the descent property.
This assumption is mild and is satisfied, for instance, when both {\tt Opt} and {\tt UpdateM} are implemented using stochastic gradient descent with diminishing step sizes; see below.

We begin by verifying the validity of Assumption~\ref{assump:descent_exp}.  
As an illustrative case, suppose {\tt Opt} and {\tt UpdateM} are implemented by SGD with diminishing step sizes $\{\alpha_k\}$ satisfying $\sum_k \alpha_k^2 < \infty$.  
Let $\hat{W}_k$ denote the weight after such an update.  
By the descent lemma (see also \citep[Lemma 4.4]{bottou2018optimization}), the expected decrease can be bounded as
\begin{equation}
    \mathbb{E}[\ell (\hat W_{k})] 
\;\le\; \mathbb{E}[\ell(W_k)]
- \alpha_k\!\left(1 - \tfrac{L\alpha_k}{2}\right)\!\mathbb{E}\|G_k\|^2
+ \tfrac{L}{2}\alpha_k^2 C.
\end{equation}
For $\alpha_k \le 1/L$, this simplifies to
\begin{equation}
    \mathbb{E}[\ell (\hat W_{k})] -\mathbb{E}[\ell(W_k)]
\;\le\; -\tfrac{\alpha_k}{2}\,\mathbb{E}\|G_k\|^2
+ \tfrac{L}{2}\alpha_k^2C.
\end{equation}
Taking positive and negative parts, we obtain
\begin{equation}
    \big[\mathbb{E}[\ell(\hat W_{k})] - \mathbb{E}[\ell(W_k)]\big]_+
\;\le\; \tfrac{L}{2}C\alpha_k^2,
\qquad
\big[\mathbb{E}[\ell(W_k)] - \mathbb{E}[\ell(\hat W_{k})]\big]_+
\;\le\; \mathbb{E}[\ell(W_k)] - \mathbb{E}[\ell(\hat W_{k})].
\end{equation}
Summing over all $k$,
\begin{equation}
\begin{split}
    \sum_{k=0}^\infty \big[\mathbb{E}[\ell(\hat W_{k})] - \mathbb{E}[\ell(W_k)]\big]_+
    &\;\le\; \tfrac{L}{2}C\sum_{k=0}^\infty \alpha_k^2 \;<\;\infty, \\
    \sum_{k=0}^\infty \big[\mathbb{E}[\ell(W_k)] - \mathbb{E}[\ell(\hat W_{k})]\big]_+
    &\;\le\; \sup_{k}\mathbb{E}[\ell(W_k)] - \ell(W^*) \;<\;\infty.
\end{split}
\end{equation}

Defining $C_k := \mathbb{E}[\ell (\hat W_{k})] - \mathbb{E}[\ell(W_k)]$, we conclude that $\sum_{k}|C_k| < \infty$, hence Assumption~\ref{assump:descent_exp} holds.

We are now ready to present the proof of our main result, Theorem~\ref{thm:conv_stochastic}.
Importantly, the result holds for {any choice} of {\tt UpdateM}, whether warm-start or restart, and at any frequency, as long as the assumptions are satisfied.
In particular, Algorithm~\ref{alg:generic_subspace} may combine different {\tt UpdateM} strategies at varying frequencies, and the guarantee remains valid.

\begin{thm}\label{thm:conv_stochastic}
   Suppose Assumption~\ref{assump:loss_regular},~\ref{assump:moments},~\ref{assump:accurate_subspace},~\ref{assump:lr_stochastic},~\ref{assump:descent_exp} hold.
   In addition, assume
   \begin{equation}\label{assump:unbiased_conditional}
       \mathbb{E}[\tilde G_k \mid W_{k-1}] = G_k \qquad \text{for all } k,
   \end{equation}
   and that $\SSS_k$ is measurable with respect to $W_{k-1}$.
   With full gradient restart, the iterates $\{W_k\}$ generated by Algorithm~\ref{alg:generic_subspace} satisfy
   \[
   \liminf_{k\to\infty} \mathbb{E}[\|G_k\|] \leq \lim_{k\to\infty} \delta_k.
   \]
\end{thm}

\begin{proof}
    Because $\SSS_k$ are subspaces and by the measurability assumption, $P_{\SSS_k}$ is fixed given $W_{k-1}$.
    Therefore, one has
    \begin{equation}\label{ineq:proj_sq_bound}
    \begin{split}
        \mathbb{E}\big(\|P_{\SSS_k}(\tilde G_k)\|^2\big)
        &=
        \mathbb{E}\Big[\mathbb{E}\big(\|P_{\SSS_k}(\tilde G_k)\|^2 \mid W_{k-1}\big)\Big] \\
        &=
        \mathbb{E}\Big[\big\| \mathbb{E}\big(P_{\SSS_k}(\tilde G_k)\mid W_{k-1}\big)\big\|^2
        +\mathbb{V}\big(P_{\SSS_k}(\tilde G_k)\mid W_{k-1}\big)\Big] \\
        &=
        \mathbb{E}\Big[\|P_{\SSS_k}(\mathbb{E}[\tilde G_k\mid W_{k-1}])\|^2
        +\mathbb{E}\big(\|P_{\SSS_k}(\tilde G_k-\mathbb{E}[\tilde G_k\mid W_{k-1}])\|^2\mid W_{k-1}\big)\Big] \\
        &=
        \mathbb{E}\Big[\|P_{\SSS_k}(G_k)\|^2
        +\mathbb{E}\big(\|P_{\SSS_k}(\tilde G_k-G_k)\|^2\mid W_{k-1}\big)\Big] \\
        &\le
        \mathbb{E}\big[\|P_{\SSS_k}(G_k)\|^2\big]
        +\mathbb{E}\big[\|\tilde G_k-G_k\|^2\big] \\
        &\le
        \mathbb{E}\big[\|P_{\SSS_k}(G_k)\|^2\big]+C.
    \end{split}
    \end{equation}
    Here, $C$ comes from the bound in Assumption~\ref{assump:moments}.

    Similar to the deterministic case, we assume the frequency of the full gradient restart is $K$, and consider $k-1 \mod K = 0$ (i.e., when a full gradient restart occurs).
    Similarly, we define
    \begin{equation}
        \hat W_{k}:=W_{k-1}-\eta_kP_{\SSS_k}(\tilde G_k).
    \end{equation}
    By the property of the full gradient restart, we have
    \begin{equation}\label{ineq:descent_stochastic_restart}
    \begin{split}
     \mathbb{E}[\ell(\hat W_{k})]
     & \leq \mathbb{E}[\ell(W_{k-1})]+\mathbb{E}\big[\langle G_k,-\eta_k P_{\SSS_k}(\tilde G_k)\rangle \big]+\frac{L}{2}\mathbb{E}[\|\eta_k P_{\SSS_k}(\tilde G_k)\|^2] \\
     & = \mathbb{E}[\ell(W_{k-1})]-\eta_k\,\mathbb{E}\Big[\big\langle P_{\SSS_k}(G_k),\,\mathbb{E}[P_{\SSS_k}(\tilde G_k)\mid W_{k-1}]\big\rangle\Big]+\frac{L\eta_k^2}{2}\mathbb{E}[\|P_{\SSS_k}(\tilde G_k)\|^2]\\
     & = \mathbb{E}[\ell(W_{k-1})]-\eta_k\,\mathbb{E}\big[\|P_{\SSS_k}(G_k)\|^2\big]+\frac{L\eta_k^2}{2}\mathbb{E}[\|P_{\SSS_k}(\tilde G_k)\|^2]\\
     & \leq \mathbb{E}[\ell(W_{k-1})]-\eta_k\,\mathbb{E}\big[\|P_{\SSS_k}(G_k)\|^2\big]
     +\frac{L\eta_k^2}{2}\Big(\mathbb{E}\big[\|P_{\SSS_k}(G_k)\|^2\big]+C\Big)\\
     & =  \mathbb{E}[\ell(W_{k-1})]-\left(\eta_k-\frac{L\eta_k^2}{2}\right)\mathbb{E}\big[\|P_{\SSS_k}(G_k)\|^2\big]+\frac{L\eta_k^2}{2}C,
    \end{split}
    \end{equation}
    where the first inequality follows from Assumption~\ref{assump:loss_regular},
    the second equality uses linearity of projection and the tower property, and the third equality uses
    \[
        \mathbb{E}[P_{\SSS_k}(\tilde G_k)\mid W_{k-1}]
        =P_{\SSS_k}(\mathbb{E}[\tilde G_k\mid W_{k-1}])
        =P_{\SSS_k}(G_k),
    \]
    which follows from~\eqref{assump:unbiased_conditional} and measurability of $P_{\SSS_k}$.

    Then by Assumption~\ref{assump:descent_exp}, suppose $k-1\mod{K}=0$, and for iterates $i=k+1,\cdots,k+K-1$,
    \begin{equation}\label{ineq:descent_stochastic_regular}
        \mathbb{E}[\ell(W_{i})]\leq \mathbb{E}[\ell(W_{i-1})]+2C_i,
    \end{equation}
    where $2C_i$ comes from bounding the scenario where both {\tt Opt} and {\tt UpdateM} operate at $i$-th iterate.
    Then summing up the inequalities~\eqref{ineq:descent_stochastic_restart} and~\eqref{ineq:descent_stochastic_regular} for $i=k+1,\cdots,k+K-1$, and use the fact that $\mathbb{E}[\ell(W_k)]\leq \mathbb{E}[\ell(\hat W_k)]+C_k$ since $W_k$ is obtained by applying {\tt Opt} to $\hat W_k$, we have
    \begin{equation}
    \left(\eta_k-\frac{L\eta_k^2}{2}\right)\mathbb{E}[\|P_{\SSS_{k}}(G_{k})\|^2]-\frac{L\eta_k^2}{2}C-C_k-2\sum_{i=k+1}^{k+K-1} C_i\leq \mathbb{E}[\ell(W_{k-1})-\ell(W_{k+K-1})].
    \end{equation}
    By Assumption~\ref{assump:lr_stochastic}, $\eta_k\to 0$ so without loss of generality, we can assume $\frac{L\eta_k}{2}\leq \frac{1}{2}$ for any $k\in\mathbb{N}$.
    Therefore for all integer $k\in\mathbb{N}$, we have
    \begin{equation}
    \frac{\eta_{kK+1}}{2}\mathbb{E}[\|P_{\SSS_{kK+1}}(G_{kK+1})\|^2]-\frac{L\eta_{kK+1}^2}{2}C-C_{kK+1}-2\sum_{i=kK+2}^{(k+1)K}C_{i}\leq \mathbb{E}[\ell(W_{kK})-\ell(W_{(k+1)K})].
    \end{equation}
    By Assumption~\ref{assump:loss_regular}, there exists a constant $\ell_\star$ so that $\ell_\star\leq \ell(W)$ for any $W$.
    Summing up for $k\in\{1,\cdots, T\}$ one has
    \begin{equation}\label{ineq:proj_grad_bound}
    \begin{split}
        \sum_{k=1}^T\frac{\eta_{kK+1}}{2}\mathbb{E}[\|P_{\SSS_{kK+1}}(G_{kK+1})\|^2]  \leq &\frac{LC}{2}\sum_{k=1}^T\eta_{kK+1}^2+\sum_{k=1}^TC_{kK+1}+2\sum_{k=1}^T\sum_{i=kK+2}^{(k+1)K} C_i \\
        & +\mathbb{E}[\ell(W_{K})]-\ell_\star.
    \end{split}
    \end{equation}
    Taking $T\to\infty$, and note that $\sum_k\eta_k^2<\infty$ and $\sum_k |C_k|<\infty$, the first and second series on the right hand side of~\eqref{ineq:proj_grad_bound} are obviously bounded.
    For the third series, note that
    \begin{equation}
        \sum_{k=1}^\infty\sum_{i=kK+2}^{(k+1)K} C_i\leq \sum_{k=1}^\infty\sum_{i=kK+2}^{(k+1)K} |C_i|\leq \sum_{i=1}^\infty |C_i|<\infty.
    \end{equation}
    Finally $\|\mathbb{E}[\ell(W_{K})]-\ell_\star\|$ is obviously bounded for a fixed $K$, and therefore, one has
    \begin{equation}
        \sum_{k=1}^\infty \eta_{kK+1} \mathbb{E}[\|P_{\SSS_{kK+1}}(G_{kK+1})\|^2]<\infty.
    \end{equation}
    By $\sum_{k}^\infty\eta_k=\infty$ and a contradiction argument, one has
    \[
        \liminf_{k\to\infty}\mathbb{E}[\|P_{\SSS_k}(G_k)\|^2]=0
        \quad\Rightarrow\quad
        \liminf_{k\to\infty}\mathbb{E}[\|P_{\SSS_k}(G_k)\|]=0,
    \]
    where the implication follows from Jensen's inequality $\mathbb{E}\|X\|\le \sqrt{\mathbb{E}\|X\|^2}$.

    Since
    \[
        \|G_{k}\|\leq \operatorname{dist}(G_{k},\mathcal{S}_{k})+\|P_{\SSS_{k}}(G_{k})\|
        \leq \delta_k+\|P_{\SSS_{k}}(G_{k})\|,
    \]
    taking expectations and $\liminf$ yields
    \[
        \liminf_{k\to\infty}\mathbb{E}[\|G_k\|]
        \le \lim_{k\to\infty}\delta_k + \liminf_{k\to\infty}\mathbb{E}[\|P_{\SSS_k}(G_k)\|]
        = \lim_{k\to\infty}\delta_k,
    \]
    which completes the proof.
\end{proof}

\section{Non-asymptotic Rate: Proof of Theorem~\ref{thm:peso_rate}}\label{sec:proof_details}
\begin{algorithm}[t]
\caption{{\tt PESO-LoRA-R} (Analysis Version)}
\label{alg:peso_lora_r_proof}
\begin{algorithmic}[1]
\Require Pre-trained parameters $W_0\in\mathbb{R}^{m\times n}$, frequency $K$, rank $r$.
\Require Restart stepsizes $\{\eta_k\}$ (used only at restart steps), Adam stepsizes $\{\alpha_k\}$.
\Require Adam hyperparameters $(\beta_1,\beta_2,\epsilon)$.
\State Set $k \gets 1$, $\tilde W_0\gets W_0$, $A_0\gets 0$, $B_0\gets 0$.
\State Initialize Adam states $m_A\gets 0$, $v_A\gets 0$, $m_B\gets 0$, $v_B\gets 0$.
\While{stopping criteria not satisfied}
    \If{$(k-1)\bmod K = 0$}
        \State $\tilde W_k\gets \tilde W_{k-1}+A_{k-1}B_{k-1}$.
        \State Compute stochastic full gradient estimator $\tilde G_k$ at $\tilde W_k$.
        \State $(U_k,\Lambda_k,V_k)\gets {\tt SVD}(-\tilde G_k)$ 
        \State Set
        $A_{k-1}\gets \sqrt{\eta_k}\,U_k\sqrt{\Lambda_k}$,
        \quad
        $B_{k-1}\gets \sqrt{\eta_k}\,\sqrt{\Lambda_k}V_k^\top$.
        \State Reset Adam states: $m_A,v_A,m_B,v_B\gets 0$. 
    \Else
        \State $\tilde W_k\gets \tilde W_{k-1}$.
    \EndIf
    \State Compute stochastic gradients
    $g_{A,k} \gets \nabla_{A}\ell(\tilde W_k + A_{k-1}B_{k-1})$,
    $g_{B,k} \gets \nabla_{B}\ell(\tilde W_k + A_{k-1}B_{k-1})$.
    \State $m_A \gets \beta_1 m_A + (1-\beta_1) g_{A,k}$,\quad
           $v_A \gets \beta_2 v_A + (1-\beta_2) g_{A,k}\odot g_{A,k}$.
    \State $m_B \gets \beta_1 m_B + (1-\beta_1) g_{B,k}$,\quad
           $v_B \gets \beta_2 v_B + (1-\beta_2) g_{B,k}\odot g_{B,k}$.
    \State Bias corrections:
    $\hat m_A \gets m_A / (1-\beta_1^{k})$,\quad $\hat v_A \gets v_A / (1-\beta_2^{k})$,
    and similarly for $(\hat m_B,\hat v_B)$.
    \State Adam updates:
    $A_k \gets A_{k-1} - \alpha_k\, \hat m_A / (\sqrt{\hat v_A} + \epsilon)$.
    \State $B_k \gets B_{k-1} - \alpha_k\, \hat m_B / (\sqrt{\hat v_B} + \epsilon)$.
    \State $k \gets k+1$.
\EndWhile 
\State \Return $\tilde W_{k-1}+A_{k-1}B_{k-1}$.
\end{algorithmic}
\end{algorithm}

In this section, we provide the proof of the non-asymptotic convergence rate stated in Theorem~\ref{thm:peso_rate} for \texttt{PESO-LoRA-R}.
To facilitate the analysis, we focus on Algorithm~\ref{alg:peso_lora_r_proof}, which simplifies the practical implementation (Algorithm~\ref{alg:peso-lora-r-practical}).
Specifically, Algorithm~\ref{alg:peso_lora_r_proof} substitutes AdamW with standard Adam and employs an explicit reset of the momentum and velocity states at each restart, replacing the smoothing and alignment heuristics described in Appendix~\ref{sec:implementation}.

We follow the notation in the main text: $k$ denotes the global iteration index, and a restart happens when $(k-1)\bmod K=0$.
For conciseness in the proof, we index restart cycles by $s\in\{0,1,\dots\}$ and denote the restart iteration by
\[
k_s := sK+1 .
\]
We write $\tilde W_s := W_{k_s}$ for the weights before doing the restart $s$ (which is the updated anchored state).
Within cycle $s$, \texttt{PESO-LoRA-R} performs $K$ inner Adam steps producing $\tilde W_{s+1}$ at the next restart.
We assume Assumptions~\ref{assump:loss_regular}--\ref{assump:subspace} from the main text.
For clarity, we restate Assumption~\ref{assump:inner} and~\ref{assump:subspace} that require careful measurability.

\begin{assumption}[Detailed version of Assumption~\ref{assump:subspace}]
\label{assump:restart_eps}
Let $P_r(\cdot)$ denote the rank-$r$ truncated SVD operator mapping a matrix to its best rank-$r$ approximation in Frobenius norm.
Assume there exists a sequence $\{\varepsilon_s\}_{s\ge 0}$ such that at each restart iterate $k_s=sK+1$, 
\begin{equation}
\label{eq:eps_assump}
\big\|\mathbb{E}\!\left[P_r(\tilde G_{k_s})\mid \tilde W_s\right]-P_r(G_{k_s})\big\|_F \le \varepsilon_s,
\quad
\text{and}
\quad
\sum_{s=0}^{\infty}\eta_s\,\varepsilon_s^2 < \infty,
\end{equation}
where $\eta_s$ is the restart stepsize at cycle $s$.
\end{assumption}

\begin{assumption}[Detailed version of Assumption~\ref{assump:inner}]
\label{assump:inner_appendix}
For each restart cycle $s$, define the inner objective
$f_s(A,B):=\ell(\tilde W_s + AB)$ and let $X:=(A,B)\in\mathbb{R}^{m\times r}\times \mathbb{R}^{r\times n}$ with dimension $d:=r(m+n)$.
Let $\{X_{s,t}\}_{t=0}^{K}$ be generated by Adam on $f_s$ for $K$ steps with per-cycle reset $m_{s,0}=0$, $v_{s,0}=0$.
At each inner step $t\ge 1$, let $g_{s,t}$ be the stochastic gradient (with respect to $X$) used by Adam.
Let $\mathcal{F}_{s,t-1}$ denote the filtration generated by all random variables (gradients and iterates) up to inner step $t-1$ of cycle $s$.
Assume:
\begin{enumerate}
\item[(i)] $f_s$ has $L_{\rm in}$-Lipschitz continuous gradients;
\item[(ii)] $\mathbb{E}[g_{s,t}\mid \mathcal{F}_{s,t-1}] = \nabla f_s(X_{s,t-1})$;
\item[(iii)] $\|g_{s,t}\|_\infty \le G_\infty$ almost surely.
\end{enumerate}
\end{assumption}

The proof proceeds in three parts: first, we establish the descent achieved by the restart step (Lemma~\ref{lem:outer_descent}); next, we bound the progress of the inner Adam updates on $X$ (Lemma~\ref{lem:adam_cycle_rigorous}); and finally, we combine these results to derive the convergence rate (Theorem~\ref{thm:rate_in_s}).

We begin with deriving a descent lemma for the outer restart update.
At restart $k_s=sK+1$, define the post-restart point
\begin{equation}
\label{eq:restart_update_def}
\hat W_s \;:=\;\tilde W_s - \eta_s\, P_r(\tilde G_{k_s}).
\end{equation}

\begin{lem}[Descent at restart]
\label{lem:outer_descent}
Let $\rho := r/p$ from Lemma~\ref{lem:twosided}.
Under Assumptions~\ref{assump:loss_regular}, \ref{assump:moments}, and \ref{assump:restart_eps}, for any $s\ge 0$,
\begin{equation}
\label{eq:outer_descent_final}
\mathbb{E}\big[\ell(\hat W_s)\big]
\;\le\;
\mathbb{E}\big[\ell(\tilde W_s)\big]
-\Bigl(\rho\eta_s-{L\eta_s^2}\Bigr)\,\mathbb{E}\|G_{k_s}\|_F^2
+\eta_s\,\mathbb{E}\big[\|G_{k_s}\|_F\,\varepsilon_s\big]
+{L\eta_s^2}\,C .
\end{equation}
\end{lem}

\begin{proof}
Fix $s$ and condition on $\tilde W_s$.
By $L$-smoothness of $\ell$ and the update rule \eqref{eq:restart_update_def},
\[
\ell(\hat W_s)
\le
\ell(\tilde W_s)
+\left\langle \nabla \ell(\tilde W_s),\, \hat W_s-\tilde W_s\right\rangle
+\frac{L}{2}\|\hat W_s-\tilde W_s\|_F^2
=
\ell(\tilde W_s)
-\eta_s\langle G_{k_s},\, P_r(\tilde G_{k_s})\rangle
+\frac{L\eta_s^2}{2}\|P_r(\tilde G_{k_s})\|_F^2.
\]
Taking the conditional expectation given $\tilde W_s$ yields
\begin{equation}
\label{eq:outer_condexp_1}
\mathbb{E}\big[\ell(\hat W_s)\mid \tilde W_s\big]
\le
\ell(\tilde W_s)
-\eta_s\left\langle G_{k_s},\, \mathbb{E}[P_r(\tilde G_{k_s})\mid \tilde W_s]\right\rangle
+\frac{L\eta_s^2}{2}\,\mathbb{E}\big[\|P_r(\tilde G_{k_s})\|_F^2\mid \tilde W_s\big].
\end{equation}
We first lower-bound the descent term: 
\begin{equation}
\begin{split}
\left\langle G_{k_s},\, \mathbb{E}[P_r(\tilde G_{k_s})\mid \tilde W_s]\right\rangle
& =
\langle G_{k_s},\, P_r(G_{k_s})\rangle
+\left\langle G_{k_s},\, \mathbb{E}[P_r(\tilde G_{k_s})\mid \tilde W_s]-P_r(G_{k_s})\right\rangle \\
& \ge \|P_r(G_{k_s})\|_F^2
- \|G_{k_s}\|_F\big\|\mathbb{E}[P_r(\tilde G_{k_s})\mid \tilde W_s]-P_r(G_{k_s})\big\|_F \\
& \ge \|P_r(G_{k_s})\|_F^2-\|G_{k_s}\|_F\,\varepsilon_s.
\end{split}
\end{equation}
where the first inequality uses $\langle G, P_r(G)\rangle = \|P_r(G)\|^2$ and Cauchy--Schwarz, and the second follows from Assumption~\ref{assump:restart_eps}.
Multiplying by $-\eta_s$ reverses the inequality, yielding
\begin{equation}
\label{eq:outer_descent_bound}
-\eta_s \left\langle G_{k_s},\, \mathbb{E}[P_r(\tilde G_{k_s})\mid \tilde W_s]\right\rangle
\le
-\eta_s \|P_r(G_{k_s})\|_F^2 + \eta_s \|G_{k_s}\|_F\,\varepsilon_s.
\end{equation}
Next, we bound the quadratic term:
\begin{equation}
\label{eq:outer_quad_bound}
\begin{split}
\frac{L\eta_s^2}{2}\,\mathbb{E}\big[\|P_r(\tilde G_{k_s})\|_F^2\mid \tilde W_s\big]
 & \le
\frac{L\eta_s^2}{2}\,\mathbb{E}\big[\|\tilde G_{k_s}\|_F^2\mid \tilde W_s\big] \\
& \le \frac{L\eta_s^2}{2}\,\left( 2\|G_{k_s}\|_F^2+ 2\mathbb{E}\big[\|\tilde G_{k_s}-G_{k_s}\|_F^2\mid \tilde W_s\big]\right) \\
& \le \frac{L\eta_s^2}{2}\,\left( 2\|G_{k_s}\|_F^2+ 2C\right)
\end{split}
\end{equation}
where the first inequality is from $\|P_r(X)\|_F\leq \|X\|_F$, the second is from Young's inequality, and the third is from Assumption~\ref{assump:moments}.
Plugging \eqref{eq:outer_descent_bound} and \eqref{eq:outer_quad_bound} into \eqref{eq:outer_condexp_1}, we obtain
\begin{equation}
\begin{split}
\mathbb{E}\big[\ell(\hat W_s)\mid \tilde W_s\big]
& \le
\ell(\tilde W_s)
-\eta_s\|P_r(G_{k_s})\|_F^2
+\eta_s\|G_{k_s}\|_F\,\varepsilon_s
+\frac{L\eta_s^2}{2}\left(2\|G_{k_s}\|_F^2 + 2C\right) \\
& \le
\ell(\tilde W_s)
-\Bigl(\rho\eta_s-{L\eta_s^2}\Bigr)\| G_{k_s} \|_F^2
+\eta_s\|G_{k_s}\|_F\,\varepsilon_s
+{L\eta_s^2}C,
\end{split}
\end{equation}
where the second inequality follows from Lemma~\ref{lem:twosided}.
Taking the total expectation completes the proof.
\end{proof}

We next show the progress made by Adam within one cycle.
Fix a cycle $s$, the inner Adam procedure starts from $X_{s,0}=(A_{s,0},B_{s,0})$ and after $K$ Adam steps yields $X_{s,K}$.
Following the indexing notation using $s$, the Adam updates take the form of 
\[
m_{s,t}=\beta_{1,t} m_{s,t-1}+(1-\beta_{1,t})g_{s,t},\qquad
v_{s,t}=\beta_2 v_{s,t-1}+(1-\beta_2)g_{s,t}^{\odot 2},
\]
with bias corrections $\hat m_{s,t}=m_{s,t}/(1-\prod_{j=1}^t\beta_{1,j})$,
$\hat v_{s,t}=v_{s,t}/(1-\beta_2^t)$, and
\[
X_{s,t}=X_{s,t-1}-\alpha_{s,t}\frac{\hat m_{s,t}}{\sqrt{\hat v_{s,t}}+\epsilon},
\qquad t=1,\dots,K.
\]

\begin{lem}[Adam progress bound]
\label{lem:adam_cycle_rigorous}
Assume Assumption~\ref{assump:inner_appendix}.
Fix a restart cycle $s$ and run $K$ steps of Adam with stepsizes $\alpha_{s,t}=a_s/\sqrt{t}$ for $t=1,\dots,K$, initialization $m_{s,0}=v_{s,0}=0$, and hyperparameters $\beta_2\in(0,1)$, $\beta_{1,t}\in[0,\bar\beta_1]$ with $\bar\beta_1 \in [0, 1)$.
Then
\begin{equation}
\label{eq:adam_cycle_bound_rigorous}
\mathbb{E}\big[f_s(X_{s,K})\big]
\le
\mathbb{E}\big[f_s(X_{s,0})\big]
+
A_1\,a_s
+
A_2\,a_s^2,
\end{equation}
where the constants are defined by
\begin{equation}
     A_1:= \frac{2d\,G_\infty^2}{\epsilon}\sum_{t=1}^K \frac{1}{\sqrt{t}},
     \qquad
     A_2:= \frac{L_{\rm in}}{2}B_1\sum_{t=1}^K \frac{1}{t}
\end{equation}
with $B_1 := \frac{d}{\epsilon^2}G_\infty^2$.
\end{lem}

\begin{proof}
Let the preconditioned step be defined as $D_{s,t} := \hat m_{s,t}/(\sqrt{\hat v_{s,t}}+\epsilon)$ so that $X_{s,t} = X_{s,t-1} - \alpha_{s,t}D_{s,t}$.
By $L_{\rm in}$-smoothness of $f_s$ and Taylor expansion
\begin{equation}
f_s(X_{s,t})
\le
f_s(X_{s,t-1})
-\alpha_{s,t}\langle \nabla f_s(X_{s,t-1}),\,D_{s,t}\rangle
+\frac{L_{\rm in}}{2}\alpha_{s,t}^2\|D_{s,t}\|_2^2.
\end{equation}
Taking total expectation and summing over $t=1,\dots,K$, we have
\begin{equation}
\label{eq:smooth_sum_main}
\mathbb{E}[f_s(X_{s,K})]
\le
\mathbb{E}[f_s(X_{s,0})]
-\sum_{t=1}^K \alpha_{s,t}\,\mathbb{E}\langle \nabla f_s(X_{s,t-1}), D_{s,t}\rangle
+\frac{L_{\rm in}}{2}\sum_{t=1}^K \alpha_{s,t}^2\,\mathbb{E}\|D_{s,t}\|_2^2.
\end{equation}
We first derive a bound for $\mathbb{E}\|D_{s,t}\|_2^2$.
We note that the bias-corrected momentum is $\hat m_{s,t}=\sum_{\tau=1}^t w_{t,\tau} g_{s,\tau}$ with $\sum_{\tau=1}^t w_{t,\tau}=1$ and $w_{t,\tau} \ge 0$, where
\[
w_{t,\tau}:=\frac{(1-\beta_{1,\tau})\prod_{j=\tau+1}^t \beta_{1,j}}{1-\prod_{j=1}^t\beta_{1,j}}\ge 0.
\]
Therefore $\hat m_{s,t}$ is a convex combination of past gradients and by Jensen's inequality and Assumption~\ref{assump:inner_appendix}(iii)
\[
\mathbb{E}\|\hat m_{s,t}\|_\infty^2 \le \sum_{\tau=1}^t w_{t,\tau} \mathbb{E}\|g_{s,\tau}\|_\infty^2 \le G_\infty^2.
\]
Since $\sqrt{\hat v_{s,t}}+\epsilon \ge \epsilon$ elementwise we then have
\begin{equation}
\label{eq:D_bound}
\mathbb{E}\|D_{s,t}\|_2^2
= \mathbb{E}\sum_{i=1}^d \frac{\hat m_{s,t,i}^2}{(\sqrt{\hat v_{s,t,i}}+\epsilon)^2}
\le \frac{1}{\epsilon^2}\mathbb{E}\|\hat m_{s,t}\|_2^2
\le \frac{d}{\epsilon^2}\mathbb{E}\|\hat m_{s,t}\|_\infty^2
\le B_1.
\end{equation}

We then denote the preconditioner as $H_{s,t}:=\mathrm{diag}(\sqrt{\hat v_{s,t}}+\epsilon)^{-1}$.
Consider the inner product term $\langle \nabla f_s(X_{s,t-1}), D_{s,t}\rangle$ and note that
\begin{equation}
\langle \nabla f_s(X_{s,t-1}), D_{s,t}\rangle
= \underbrace{\langle \nabla f_s(X_{s,t-1}), H_{s,t}\nabla f_s(X_{s,t-1})\rangle}_{(I)}
+ \underbrace{\langle \nabla f_s(X_{s,t-1}), H_{s,t}(\hat m_{s,t}-\nabla f_s(X_{s,t-1}))\rangle}_{(II)}.
\end{equation}
Since $H_{s,t} \succ 0$, $(I) \ge 0$, implying $-\alpha_{s,t} \mathbb{E}[(I)] \le 0$.

We next show $\|\nabla f_s\|_2 \le \sqrt{d}G_\infty$.
Let $\mathcal{F}_{s,t-1}$ denote the filtration generated by all random variables (gradients and iterates) up to inner step $t-1$ of cycle $s$.
By Assumption~\ref{assump:inner_appendix}(ii), we have $\nabla f_s(X_{s,t-1}) = \mathbb{E}[g_{s,t}\mid \mathcal{F}_{s,t-1}]$.
Using Jensen's inequality for the convex norm $\|\cdot\|_\infty$ and the almost sure bound $\|g_{s,t}\|_\infty \le G_\infty$ from Assumption~\ref{assump:inner_appendix}(iii),
\[
\|\nabla f_s(X_{s,t-1})\|_\infty
= \|\mathbb{E}[g_{s,t}\mid \mathcal{F}_{s,t-1}]\|_\infty
\le \mathbb{E}[\|g_{s,t}\|_\infty\mid \mathcal{F}_{s,t-1}]
\le G_\infty.
\]
Since $\|v\|_2 \le \sqrt{d}\|v\|_\infty$ for any $v \in \mathbb{R}^d$, we have
\begin{equation}
\label{eq:grad_norm_bound}
\|\nabla f_s(X_{s,t-1})\|_2 \le \sqrt{d}\,\|\nabla f_s(X_{s,t-1})\|_\infty \le \sqrt{d}G_\infty.
\end{equation}

For $(II)$, using Cauchy--Schwarz, $\|H_{s,t}\|_2 \le 1/\epsilon$, and $\|\nabla f_s\|_2 \le \sqrt{d}G_\infty$,
\begin{equation}
|(II)| \le \|\nabla f_s(X_{s,t-1})\|_2 \|H_{s,t}\|_2 \|\hat m_{s,t} - \nabla f_s(X_{s,t-1})\|_2
\le \frac{\sqrt{d}G_\infty}{\epsilon} \|\hat m_{s,t} - \nabla f_s(X_{s,t-1})\|_2.
\end{equation}
Moreover, by $\|\hat m_{s,t}\|_2 \le \sqrt{d}\|\hat m_{s,t}\|_\infty \le \sqrt{d}G_\infty$ and $\|\nabla f_s(X_{s,t-1})\|_2 \le \sqrt{d}G_\infty$, we have
\[
\|\hat m_{s,t} - \nabla f_s(X_{s,t-1})\|_2 \le \|\hat m_{s,t}\|_2 + \|\nabla f_s(X_{s,t-1})\|_2 \le 2\sqrt{d}\,G_\infty.
\]
Thus, we obtain a bound linear in $\alpha_{s,t}$:
\begin{equation}
-\alpha_{s,t}\mathbb{E}[(II)]
\le
\alpha_{s,t}\,\frac{\sqrt{d}G_\infty}{\epsilon}\,(2\sqrt{d}\,G_\infty)
=
\alpha_{s,t}\,\frac{2d\,G_\infty^2}{\epsilon}.
\end{equation}

Finally, substituting $\alpha_{s,t} = a_s/\sqrt{t}$ into \eqref{eq:smooth_sum_main}, we have
\begin{equation}
\begin{split}
\mathbb{E}[f_s(X_{s,K})]
&\le \mathbb{E}[f_s(X_{s,0})] + \sum_{t=1}^K \alpha_{s,t}\,\frac{2d\,G_\infty^2}{\epsilon} + \frac{L_{\rm in}}{2} \sum_{t=1}^K \alpha_{s,t}^2 B_1 \\
&= \mathbb{E}[f_s(X_{s,0})] + a_s \left( \frac{2d\,G_\infty^2}{\epsilon} \sum_{t=1}^K \frac{1}{\sqrt{t}} \right) + a_s^2 \left( \frac{L_{\rm in}}{2} B_1 \sum_{t=1}^K \frac{1}{t} \right) \\
&= \mathbb{E}[f_s(X_{s,0})] + A_1 a_s + A_2 a_s^2,
\end{split}
\end{equation}
according to the definition of $A_1$ and $A_2$.
This finishes the proof.
\end{proof}

\begin{thm}[Convergence rate in restart cycles]
\label{thm:rate_in_s}
Let $\ell_\star := \inf_W \ell(W) > -\infty$ and $\rho = r/p$ from Lemma~\ref{lem:twosided}.
Suppose Assumptions~\ref{assump:loss_regular}, \ref{assump:moments}, \ref{assump:restart_eps}, and \ref{assump:inner_appendix} hold.
Fix the total number of restart cycles $S \ge 1$, and choose the step sizes as $\eta_s=\frac{1}{\sqrt{S}}$ and $a_s=\frac{1}{S}$.
Set the hyperparameters of Adam as those in Lemma~\ref{lem:adam_cycle_rigorous}.
Then, the minimum expected gradient norm over the restart points satisfies
\begin{equation}
\label{eq:thm_rate_result}
\min_{0\le s\le S-1} \mathbb{E}\|G_{k_s}\|_F^2
\;\le\;
\frac{1}{\sqrt{S}}\left(\frac{4}{\rho} \left( \mathbb{E}[\ell(\tilde W_0)] - \ell_\star + \frac{1}{\rho}\bar{E}_{\rm sq} + LC + A_1+A_2 \right) \right),
\end{equation}
where $\bar{E}_{\rm sq} := \sum_{s=0}^{S-1} \eta_s \varepsilon_s^2$ (finite because of Assumption~\ref{assump:restart_eps}), $C$ from Assumption~\ref{assump:moments}, and $A_1, A_2$ from Lemma~\ref{lem:adam_cycle_rigorous}.
\end{thm}

\begin{proof}
We combine the descent progress from the restart step (Lemma~\ref{lem:outer_descent}) with the drift bound from the Adam inner steps (Lemma~\ref{lem:adam_cycle_rigorous}).
By Lemma~\ref{lem:outer_descent}, the update at restart $s$ satisfies:
\begin{equation}
\label{eq:proof_outer}
\mathbb{E}[\ell(\hat W_s)]
\le
\mathbb{E}[\ell(\tilde W_s)]
-\left(\rho\eta_s - L\eta_s^2\right)\mathbb{E}\|G_{k_s}\|_F^2
+\eta_s\,\mathbb{E}[\|G_{k_s}\|_F\varepsilon_s]
+L\eta_s^2 C.
\end{equation}

Without loss of generality, we assume $\eta_s \le \rho/2L$ (which holds for large $S$).
Therefore,
\[
\rho\eta_s - L\eta_s^2 \;\ge\; \frac{\rho}{2}\eta_s.
\]
Substituting this into \eqref{eq:proof_outer} gives
\begin{equation}
\label{eq:proof_outer_clean}
\mathbb{E}[\ell(\hat W_s)]
\le
\mathbb{E}[\ell(\tilde W_s)]
-\frac{\rho}{2}\eta_s\,\mathbb{E}\|G_{k_s}\|_F^2
+\eta_s\,\mathbb{E}[\|G_{k_s}\|_F\varepsilon_s]
+L\eta_s^2 C.
\end{equation}

By Young's inequality ($xy \le \frac{\rho}{4}x^2 + \frac{1}{\rho}y^2$ with $x=\sqrt{\eta_s}\|G_{k_s}\|_F$ and $y=\sqrt{\eta_s}\varepsilon_s$),
\begin{equation}
\mathbb{E}[\ell(\hat W_s)]
\le
\mathbb{E}[\ell(\tilde W_s)]
-\frac{\rho}{4}\eta_s\,\mathbb{E}\|G_{k_s}\|_F^2
+\frac{1}{\rho}\eta_s\varepsilon_s^2
+L\eta_s^2 C.
\end{equation}

By Lemma~\ref{lem:adam_cycle_rigorous}, identifying $f_s(X_{s,0}) \equiv \ell(\hat W_s)$ and $f_s(X_{s,K}) \equiv \ell(\tilde W_{s+1})$,
\begin{equation}
\label{eq:proof_inner}
\mathbb{E}[\ell(\tilde W_{s+1})]
\le
\mathbb{E}[\ell(\hat W_s)] + A_1 a_s + A_2 a_s^2.
\end{equation}

Combining these yields the recurrence for cycle $s$:
\begin{equation}
\mathbb{E}[\ell(\tilde W_{s+1})]
\le
\mathbb{E}[\ell(\tilde W_s)]
-\frac{\rho}{4}\eta_s\,\mathbb{E}\|G_{k_s}\|_F^2
+\frac{1}{\rho}\eta_s\varepsilon_s^2
+L\eta_s^2 C
+ A_1 a_s + A_2 a_s^2.
\end{equation}

Rearranging and summing over $s=0, \dots, S-1$ we have
\begin{equation}
\label{eq:general_sum_bound}
\frac{\rho}{4} \sum_{s=0}^{S-1} \eta_s \mathbb{E}\|G_{k_s}\|_F^2
\le
\mathbb{E}[\ell(\tilde W_0)] - \mathbb{E}[\ell(\tilde W_S)]
+\frac{1}{\rho}\sum_{s=0}^{S-1}\eta_s\varepsilon_s^2
+L C\sum_{s=0}^{S-1}\eta_s^2
+ \sum_{s=0}^{S-1} (A_1 a_s + A_2 a_s^2).
\end{equation}

We now substitute the specific step sizes $\eta_s = \frac{1}{\sqrt{S}}$ and $a_s = \frac{1}{S}$.
For the Adam error terms we have
\[
\sum_{s=0}^{S-1} (A_1 a_s + A_2 a_s^2)
= A_1 + \frac{A_2}{S}
\le A_1 + A_2.
\]
Substituting this back, and by $\ell(\tilde W_S) \ge \ell_\star$,
\[
\frac{\rho}{4\sqrt{S}} \sum_{s=0}^{S-1} \mathbb{E}\|G_{k_s}\|_F^2
\le
\mathbb{E}[\ell(\tilde W_0)] - \ell_\star
+ \frac{1}{\rho}\bar{E}_{\rm sq}
+ LC
+ A_1 + A_2.
\]
Finally, since $\sum_{s=0}^{S-1} \mathbb{E}\|G_{k_s}\|_F^2 \ge S \min_{s} \mathbb{E}\|G_{k_s}\|_F^2$, divide both sides by $\frac{\rho}{4}\sqrt{S}$ to get
\[
\min_{0\le s\le S-1} \mathbb{E}\|G_{k_s}\|_F^2
\le
\frac{4}{\rho \sqrt{S}} \left( \mathbb{E}[\ell(\tilde W_0)] - \ell_\star + LC + A_1+A_2 + \frac{1}{\rho}\bar{E}_{\rm sq} \right).
\]
This finishes the proof.
\end{proof}

Finally, we present the convergence rate in terms of the total number of iterations, as stated in Theorem~\ref{thm:peso_rate}, using the relationship that each restart cycle consists of $K$ iterations.

\begin{cor}[Convergence rate in total iterations]
\label{cor:rate_in_T}
Let $T$ be the total number of iterations allowed.
Let $K$ be the fixed number of Adam steps per restart cycle, such that the number of restart cycles is $S = \lfloor T/K \rfloor$.
Under the conditions of Theorem~\ref{thm:rate_in_s}, the convergence rate with respect to the minimum gradient norm over all $T$ iterates is:
\begin{equation}
\min_{1\le k\le T} \mathbb{E}\|G_k\|_F^2
\;\le\;
\frac{C_0}{\sqrt{\lfloor T/K \rfloor}}
\;=\;
\mathcal{O}\left(\frac{1}{\sqrt{T}}\right),
\end{equation}
where $C_0 := \frac{4}{\rho} \left( \mathbb{E}[\ell(\tilde W_0)] - \ell_\star + LC + (A_1+A_2) + \frac{1}{\rho}\bar{E}_{\rm sq} \right)$.
\end{cor}

\begin{proof}
Let $\mathcal{S} = \{k_0, k_1, \dots, k_{S-1}\}$ be the set of indices corresponding to restart points. Since the set of restart points is a subset of all iterates $\{1, \dots, T\}$, the minimum over all iterates is upper bounded by the minimum over the restart points:
\[
\min_{1\le k\le T} \mathbb{E}\|G_k\|_F^2
\;\le\;
\min_{s \in \{0, \dots, S-1\}} \mathbb{E}\|G_{k_s}\|_F^2.
\]
Substituting $S = \lfloor T/K \rfloor$ into the bound from Theorem~\ref{thm:rate_in_s}:
\[
\min_{0\le s\le S-1} \mathbb{E}\|G_{k_s}\|_F^2
\le
\frac{C_0}{\sqrt{S}}
= \frac{C_0}{\sqrt{\lfloor T/K \rfloor}}.
\]
For large $T$, we have $\sqrt{\lfloor T/K \rfloor} \approx \sqrt{T/K}$. Thus, the bound simplifies to:
\[
\frac{C_0}{\sqrt{T/K}}
= \frac{C_0 \sqrt{K}}{\sqrt{T}}.
\]
Thus, the rate for the iterates is $\mathcal{O}(1/\sqrt{T})$.
\end{proof}

\end{document}